\newcommand{\ra}{\rightarrow}
\newcommand{\ZZ}{\mathbb Z}
\newcommand{\PP}{\mathbb P}
\newcommand{\cO}{\mathcal{O}}
\newcommand{\cC}{\mathcal{C}}
\newcommand{\cL}{\mathcal{L}}
\newcommand{\Pic}{\mbox{Pic}}
\theoremstyle{plain}
\newtheorem{theorem}{Theorem}[section]
\newtheorem{lem}[theorem]{Lemma}
\newtheorem{prop}[theorem]{Proposition}
\newtheorem{cor}[theorem]{Corollary}
\newtheorem{defin}[theorem]{Definition}
\newtheorem{rem}[theorem]{Remark}
\newtheorem{ex}[theorem]{Example}
\newtheorem{qu}[theorem]{Question}
\numberwithin{equation}{section}
\begin{document}
\title[Gonality sequence]{On the gonality sequence of an algebraic curve}

\author{H. Lange}
\author{G. Martens}

\address{H. Lange\\Department Mathematik\\
              Universit\"at Erlangen-N\"urnberg\\
              Bismarckstra\ss e $1\frac{ 1}{2}$\\
              D-$91054$ Erlangen\\
              Germany}
              \email{lange@mi.uni-erlangen.de}
\address{G. Martens\\Department Mathematik\\
              Universit\"at Erlangen-N\"urnberg\\
              Bismarckstra\ss e $1\frac{ 1}{2}$\\
              D-$91054$ Erlangen\\
              Germany}
              \email{martens@mi.uni-erlangen.de}

\thanks{}
\keywords{gonality, special curves}
\subjclass[2000]{Primary: 14H45; Secondary: 14H51, 32L10}

\begin{abstract}
For any smooth irreducible projective curve $X$, the gonality sequence $\{ d_r \;| \; r \in \mathbb N \}$ is a strictly increasing sequence 
of positive integer invariants of $X$. In most known cases $d_{r+1}$ is not much bigger than $d_r$. In our terminology this means 
the numbers $d_r$ satisfy the slope inequality. It is the aim of this paper to study cases when this is not true. We
give examples for this of extremal curves in $\PP^r$, for curves on a general K3-surface in $\PP^r$ and for complete 
intersections in $\PP^3$.  
\end{abstract}

\maketitle

\section{Introduction}

Let $X$ be a complex smooth irreducible projective curve of genus $g \geq 4$. If we understand by the degree of a rational 
map $X \ra \PP^r$
the product of the usual degree of the map onto its image times the degree of the closure of its image in $\PP^r$, 
then, for any positive integer $r$, the invariant $d_r = d_r(X)$ 
of $X$ is defined to be the smallest number $d$ such that $X$ admits a nondegenerate rational map of degree $d$ into $\PP^r$. 
The invariant $d_1$ is the usual gonality of $X$. Therefore the sequence
$d_1,d_2,d_3,\ldots$ is called the {\it gonality sequence} of $X$.

For any curve and any $r \geq g$, the numbers $d_r$ are known by Riemann-Roch. Hence there are only 
finitely many interesting numbers in a gonality sequence.
By Brill-Noether theory the whole sequence is known for general curves of genus $g$.                                    
Moreover it is known for smooth plane, hyperelliptic, trigonal, general tetragonal, general pentagonal 
and bielliptic curves 
(for the references see \cite{ln} and Proposition \ref{prop4.2} below). 
Apart from that the knowledge on the numbers $d_r$ seems to be scarce. It is the aim of this note to
give a first systematic investigation of these numbers.

Clearly the gonality sequence plays an essential r${\hat o}$le in the theory of special curves. Recently it turned out that
it also is important for Brill-Noether theory of vector bundles on $X$. In fact, most of the proofs in \cite{ln} use how some of 
the numbers $d_r$
are related to each other. In ``most'' cases they satisy the inequality
$$
\frac{d_r}{r} \geq \frac{d_{r+1}}{r+1}.
$$
It is true for all $r$ and all the examples of curves mentioned above, apart from a few numbers $r$ for smooth plane curves
(see Proposition \ref{prop3.2}). 
We call it the {\it slope inequality} (for the gonality sequence) and it was the main motivation for us to find more counter-examples to 
its validity for all $r$. \\

In Corollary \ref{cor3.4} we show that the slope inequality is violated for any $r \geq 2$ and any extremal 
curve of degree $d = 3r-1$ in $\PP^r$. More generally, we prove that the curves of the following three classes
do not satisfy all slope inequalities:
\begin{enumerate}
\item extremal curves of degree $\geq 3r-1$ in $\PP^r$ ($r \geq 2$; Theorem \ref{thm3.10}),
\item smooth curves of degree $\geq 8(r-1)$ on a general K3-surface of degree $2(r-1)$ in $\PP^r$ ($r\geq 3$; Theorem \ref{thm5.2}),
\item smooth complete intersections of surfaces of degree $s$ and $p$ with $2 \leq s \leq p$ and $p \geq 4$ 
in $\PP^3$ (Theorem \ref{thm6.1}).
\end{enumerate}
 
Section 2 contains some preliminaries, mainly on extremal curves. In Section 3 we recall some results on the gonality sequence and 
prove some new ones. Section 4 contains the proofs of the above mentioned results concerning extremal curves, 
as well as some consequences and examples. Finally in Section 5 respectively 6 we 
prove the results on curves on a general K3-surface respectively on complete intersection curves in $\PP^3$. Moreover, 
the case of complete intersection curves in $\PP^3$ is generalized to Halphen curves in $\PP^3$.

\section{Preliminaries}

Let $X$ denote an irreducible smooth projective curve of genus $g \geq 1$ over the field of complex numbers.
We recall Castelnuovo's genus bound, following \cite{a2}, \cite{acgh} or \cite{ci}: 
Let $g_d^r$ be a simple linear series on $X$ such that $d > r > 1$, and let $\chi(d,r)$ denote the unique positive integer
satisfying
$$
\frac{d-1}{r-1} - 1 \leq \chi(d,r) < \frac{d-1}{r-1}.
$$
Then we have for the genus $g$ of $X$,
\begin{equation}  \label{eqcast}
g \leq \pi(d,r) :=  \chi(d,r) \left(\frac{\chi(d,r) - 1}{2}(r-1) + \epsilon \right) 
\end{equation}
where we write $d-1 = \chi(d,r)(r-1) + \epsilon$ with $1 \leq \epsilon \leq r-1$. If the upper bound is attained,
the $g_d^r$ is complete and very ample and the curve $X \subset \PP^r$ is called an {\it extremal curve}. 

If $d < 2r$, the $g_d^r$ is non-special and we obtain $\pi(d,r) = d-r$. If $2r \leq d < 3r-1$, one computes 
$\pi(d,r) =2d -3r + 1$. In particular we have $\pi(d,r) < d$ for $d < 3r-1$. 

In view of Theorem \ref{thm3.10} below we are interested in the set of genera of extremal curves of degree
$d \geq 3r-1$ in $\PP^r$. So let $d \geq 3r-1$. By definition of $\chi := \chi(d,r)$ this is equivalent to $\chi \geq 3$,

According to \eqref{eqcast} $\chi$ divides $\pi(d,r)$ if $\chi$ is odd, and $\frac{\chi}{2} \geq 2$ divides $\pi(d,r)$ if 
$\chi$ is even. For $r \geq 3$ we have
$$
\frac{\chi - 1}{2}(r-1) + \epsilon \geq \frac{\chi - 1}{2} \cdot 2 + 1 = \chi \geq 3,
$$
and for $r=2$ we have 
$$
3 \leq \chi = d-2 \quad \mbox{and} \quad \frac{\chi - 1}{2}(r-1) + \epsilon = \frac{d-1}{2} \geq 2.
$$
In particular, $\pi(d,r)$ cannot be a prime number, and $\pi(d,r) \neq 1,4,8$. We claim that $\pi(d,r) \neq 14$, too.
This is clear for $r=2$ and for $r \geq 3$ we have $\pi(d,r) \geq \chi^2$. So $\chi$ would have to be 7 for odd $k$ and 
$\chi \geq 4$ for even $k$.\\ 

Now consider the set
$$
C := \{ \pi(d,r) \;|\; r \geq 2 \; \mbox{and} \; \chi(d,r) \geq 3 \}
$$
of genera of extremal curves of degree $d \geq 3r-1$ in $\PP^r$. 

\begin{prop} \label{genera}
Let $3 \leq g \in \ZZ,\; g \neq 4,8,14$. Then $g \not \in C$ implies that $g$ is odd with at most two different prime factors.
\end{prop}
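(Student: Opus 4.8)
The plan is to prove the contrapositive: assuming $g \ge 3$, $g \notin \{4,8,14\}$, and that $g$ is \emph{not} odd with at most two prime factors --- that is, $g$ is even, or $g$ is odd with at least three distinct prime divisors --- I will exhibit $g$ as a value $\pi(d,r)\in C$. The starting point is that every triple $(\chi,r,\epsilon)$ with $\chi \ge 3$, $r \ge 2$ and $1 \le \epsilon \le r-1$ is admissible: putting $d=\chi(r-1)+\epsilon+1$ gives $\chi(d,r)=\chi$ with remainder $\epsilon$ exactly as in the preceding computation, so that
\[
\pi(d,r)=\chi\!\left(\frac{(\chi-1)(r-1)}{2}+\epsilon\right)\in C .
\]
I will only ever use $\chi=3$, where $\pi=3\bigl((r-1)+\epsilon\bigr)$; the value $\chi=4$, where $\pi=2\bigl(3(r-1)+2\epsilon\bigr)$; and odd values $\chi=2k+1$.

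Consider first the even case, $g$ even. If $3\mid g$, take $\chi=3$, so $g=3N$ with $N=g/3\ge 2$, realized by $r-1=N-1$ and $\epsilon=1$. If $3\nmid g$, write $g=2V$ and use $\chi=4$. Since $3\nmid V$, set $\epsilon=1$ when $V\equiv 2\pmod 3$ and $\epsilon=2$ when $V\equiv 1\pmod 3$; then $3\mid(V-2\epsilon)$, and putting $r-1=(V-2\epsilon)/3$ yields $\pi=2\bigl(3(r-1)+2\epsilon\bigr)=2V=g$. The one condition to check is $\epsilon\le r-1$, which reads $V\ge 5\epsilon$; it fails only for $V=2$ (with $\epsilon=1$) and for $V\in\{1,4,7\}$ (with $\epsilon=2$), that is, only for $g\in\{2,4,8,14\}$. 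Discarding $g=2$, every even $g\ge 3$ with $g\notin\{4,8,14\}$ lies in $C$.

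For the odd case I first record a realizability remark for odd $\chi=2k+1$: here $\pi=\chi N$ with $N=k(r-1)+\epsilon$, so as $\epsilon$ runs through $\{1,\dots,r-1\}$ the integer $N$ fills $[\,k(r-1)+1,\,(k+1)(r-1)\,]$, and hence a prescribed $N$ occurs as soon as the closed interval $[\,N/(k+1),\,(N-1)/k\,]$ contains an integer. Its length is $(N-k-1)/\bigl(k(k+1)\bigr)$, which is $\ge 1$ once $N\ge (k+1)^2$; the left endpoint is then $\ge k+1\ge 1$, so the integer found is a legitimate value of $r-1$. Thus $\chi N\in C$ whenever $\chi=2k+1$ and $N\ge(k+1)^2$. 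Now let $g$ be odd with at least three distinct prime divisors, and let $p=2k+1$ be its smallest prime factor, so $p\ge 3$. Two further distinct primes $p_1,p_2>p$ divide $g$, whence $N:=g/p\ge p_1p_2>p^2\ge(k+1)^2$. By the remark, $g=\chi N\in C$ with $\chi=p$, which settles this case.

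The delicate part is the even case: the available slack is minimal there, and the entire force of the statement is that the single inequality $\epsilon\le r-1$, i.e.\ $V\ge 5\epsilon$, fails on exactly the three admissible values $g=4,8,14$ and on no other $g\ge 3$ --- this is precisely what pins down the exceptional set. The odd case is by comparison effortless, since $N=g/p$ then dwarfs $(k+1)^2$ and the only input is the elementary interval-length estimate.
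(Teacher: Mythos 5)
Your proof is correct and follows essentially the same route as the paper: even $g$ is realized via $\chi=3$ and $\chi=4$ (your two subcases $\epsilon=1,2$ are exactly the paper's families $\pi=6r-2$ and $\pi=6r+2$), and odd $g$ with three distinct prime factors is handled by the same interval argument $\frac{2\alpha}{p+1}\le r-1\le\frac{2\alpha-2}{p-1}$, with your sufficient condition $N\ge(k+1)^2$ playing the role of the paper's $4\alpha\ge p(p+1)$. The only substantive difference is that the paper additionally establishes the (unneeded for this statement) facts that three prime factors counted with multiplicity suffice and that $p^2\in C$.
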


\begin{proof}
First we show that any even positive integer $g \neq 2,4,8,14$ is in $C$.

In fact, for $d = 3r-1, \; r \geq 2$ we have $\pi(d,r) = 3r = 6,9,12, \cdots$, for $d = 4r-2, \; r \geq 2$ we obtain
$\pi(d,r) = 6r -2 = 10, 16,22, \cdots$ and for $d = 4r-1, \; r \geq 3$ we get $\pi(d,r) = 6r+2 = 20,26,32,\cdots$.

Next we prove the following assertion: Suppose $g \geq 3$ is an odd integer which is not a prime number, and 
let $p$ be the smallest prime divisor of $g$. If $g \geq \frac{1}{4}p^2(p+1)$, then $g \in C$.

Note that the assertion implies that any positive odd integer $g$ with at least $3$ (not necessarily distinct) prime divisors
lies in $C$.

To prove the assertion, let $g = p \cdot \alpha$ with $p \leq \alpha \in \ZZ$.
For an integer $r \geq 2$ we can write 
$$
\frac{p-1}{2}(r-1)+1 \leq \alpha \leq \frac{p-1}{2}(r-1) + (r-1) = \frac{p+1}{2}(r-1)
$$
if and only if
$$
\frac{2 \alpha}{p+1} \leq r-1 \leq \frac{2\alpha -2}{p-1}.
$$
We can find such an integer $r$ if and only if 
$$
\lceil \frac{2 \alpha}{p+1} \rceil \leq \lfloor \frac{2 \alpha -2}{p-1} \rfloor.
$$ 

We have $\lfloor \frac{2 \alpha -2}{p-1} \rfloor = \frac{2 \alpha -2 -i}{p-1} \geq \frac{2 \alpha -2 - (p-2)}{p-1}$ 
for some $i, \; 0 \leq i \leq p-2$. Thus, if
\begin{equation} \label{equat2.2}
\frac{2 \alpha -p}{p-1} \geq \frac{2 \alpha}{p+1},
\end{equation}
we have $\lfloor \frac{2 \alpha -2}{p-1} \rfloor \geq \frac{2 \alpha}{p+1}$ and thus
$\lfloor \frac{2 \alpha -2}{p-1} \rfloor \geq \lceil \frac{2 \alpha}{p+1} \rceil$. So \eqref{equat2.2} implies that we 
can find an integer $r$ we want, and if we set $\epsilon := \alpha - \frac{p-1}{2}(r-1)$, we have 
$$
g = p \alpha = p \left(\frac{p-1}{2}(r-1) + \epsilon \right) \quad  \mbox{with}  \quad 1 \leq \epsilon \leq r-1.
$$
Setting $d := p(r-1) + 1 + \epsilon$, we can find an extremal curve of degree $d$ in $\PP^r$ of genus $g$ (with $\chi = p \geq 3$),
i.e. $g \in C$.

Obviously \eqref{equat2.2} is equivalent to $4 \alpha \geq p(p+1)$. So this proves the assertion.

We have already seen that the assertion implies that an odd positive integer $g \not \in C$ can have at most two prime factors.
Now if $g = p^2$ ($p$ an odd prime), we may take an extremal space curve of degree $d = 2p+2$. This curve has genus $p^2$, 
showing that $p^2 \in C$.
\end{proof}

\begin{rem}
{\em 
The proof of the assertion shows that if $p, p+2$ are twin primes, then $p(p+2) \not \in C$, unless $p \leq 5$.
If, however, $p$ and $q$ are odd primes such that $q \geq \frac{p(p+1)}{4}$, then $pq \in C$ by the assertion.
$  \hspace{1.8cm} \square$  }
\end{rem}

The paper \cite{ci} is somewhat difficult to obtain. So for the convenience of the reader 
we close this section by recalling two results of it on curves in $\PP^r$ of ``sufficiently high'' genus, 
which will be needed in the sequel.
For the following result see \cite[Teorema 2.11]{ci}.

\begin{theorem} \label{thm2.3}
Let $g_d^r$ and $g_m^s$ be linear series on $X$  such that $g_d^r$ is base point free and simple,
$$
m \leq kd \quad \mbox{and} \quad  s \geq {k+1 \choose 2}(r-1) + k
$$
for some integer $k,\; 1 \leq k < \chi(d,r)$. Then 
$$
g_m^s = kg_d^r \quad \mbox{or} \quad g \leq \pi(d,r) - \chi(d,r) +k.
$$
\end{theorem}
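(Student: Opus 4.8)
The plan is to argue by contraposition: assume $g > \pi(d,r) - \chi(d,r) + k$ and deduce $g_m^s = kg_d^r$; write $\chi = \chi(d,r)$ throughout. Since $g_d^r$ is base point free and simple, it defines a birational morphism $\phi : X \to C \subset \PP^r$ onto a nondegenerate curve of degree $d$, and $kg_d^r$ is the series cut out on $X$ by hypersurfaces of degree $k$; it has degree $kd$. Let $\Gamma = C \cap H$ be a general hyperplane section. By the general position theorem \cite{acgh}, $\Gamma$ is a set of $d$ points in general position in $H \cong \PP^{r-1}$, so its Hilbert function satisfies $h_\Gamma(j) \geq \min\{d, j(r-1)+1\}$. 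The hypothesis $k < \chi$ guarantees $j(r-1)+1 \leq d$ for $1 \leq j \leq k$ (because $(\chi - 1)(r-1) < d - 1$ by the definition of $\chi$), and summing the first $k$ successive differences gives $\dim(kg_d^r) \geq \binom{k+1}{2}(r-1)+k$. Thus the hypotheses on $g_m^s$ match exactly the degree and the minimal dimension of $kg_d^r$, which is the numerical coincidence the theorem exploits.

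Next I would use Castelnuovo's procedure to bound the genus by $g \leq \sum_{j\geq 1}(d - h_\Gamma(j))$; inserting the minimal values of $h_\Gamma$ recovers $g \leq \pi(d,r)$ as in \eqref{eqcast}. Hence the assumption $g > \pi(d,r) - (\chi - k)$ forces $\sum_{j\geq 1}\big(h_\Gamma(j) - \min\{d, j(r-1)+1\}\big) < \chi - k$; in other words, $\Gamma$ imposes the minimal number of conditions in degrees $1, \ldots, \chi - 1$ up to a total excess of at most $\chi - k - 1$. This places $\Gamma$ in nearly Castelnuovo-extremal position, and by Castelnuovo's lemma \cite{acgh} it must lie on a rational normal curve of degree $r-1$ in $\PP^{r-1}$. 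Consequently $C$ lies on a surface $S \subset \PP^r$ of minimal degree $r-1$, i.e. a rational normal scroll (or the Veronese surface when $r = 5$).

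On such a minimal surface the Picard group and the class of $\cO_S(1)$ are explicit, which lets me classify effective divisor classes on $S$ of degree $\leq kd$ whose restriction to $C$ has dimension $\geq \binom{k+1}{2}(r-1)+k$. Balancing degree against dimension in the near-extremal regime, the only class producing such a series is $k$ times the hyperplane class; restricting to $C$ and pulling back to $X$ identifies $g_m^s$ with $kg_d^r$, and the degree bound $m \leq kd$ is then forced to be an equality, so no base points are lost. This establishes the first alternative and completes the contrapositive.

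I expect the main obstacle to be the passage from the numerical near-extremality to the exact geometric conclusion. Two points require care: first, applying Castelnuovo's lemma with a nonzero but bounded excess (at most $\chi - k - 1$) rather than in the sharply extremal case, so that $S$ is still forced to have minimal degree; and second, the bookkeeping of the final partial step governed by $\epsilon$ in $d-1 = \chi(r-1)+\epsilon$, which must be tracked so that the genus threshold comes out as exactly $\pi(d,r) - \chi + k$ and the classification on $S$ leaves no series of the prescribed degree and dimension other than $kg_d^r$. The base case $k=1$ and the distinction between scrolls and the Veronese surface in small $r$ should be treated separately.
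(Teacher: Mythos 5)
First, a point of reference: the paper does not prove this statement at all --- it is quoted from Ciliberto \cite[Teorema 2.11]{ci} ``for the convenience of the reader'' --- so there is no in-paper proof to compare against. Judged on its own, your proposal gets the numerical setup exactly right: the hypotheses $m\le kd$ and $s\ge\binom{k+1}{2}(r-1)+k$ do match the degree of $kg_d^r$ and the general-position lower bound $\dim(kg_d^r)\ge\sum_{j=1}^{k}\bigl(j(r-1)+1\bigr)$, and the contrapositive formulation is the natural one. But the geometric route you then take has two genuine gaps.

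The first is that from $g>\pi(d,r)-\chi+k$ you only control the \emph{total} excess $\sum_j\bigl(h_\Gamma(j)-\min\{d,j(r-1)+1\}\bigr)\le\chi-k-1$, which is positive whenever $k<\chi-1$; Castelnuovo's lemma requires the excess in degree $2$ to vanish, i.e.\ $h_\Gamma(2)=2r-1$ exactly. Ruling out $h_\Gamma(2)\ge 2r$ needs the refined genus bound for curves not on a surface of minimal degree, verified to lie below $\pi(d,r)-\chi+k$ throughout the parameter range --- essentially Proposition \ref{prop2.4}, itself a nontrivial result of \cite{ci}, so you would be assuming a close cousin of the target. The second gap is the fatal one: even granting $C\subset S$ with $\deg S=r-1$, nothing guarantees that the abstract series $g_m^s$ on $X$ is cut out by a divisor class on $S$. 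A low-degree line bundle with many sections on a curve lying on a surface need not extend to the surface; establishing such a lifting is exactly what costs Reider's theorem and $\Pic S=\ZZ H$ in Section 5 of this paper in the K3 case, and no comparable statement is available for scrolls or the Veronese in the generality you need. Classifying divisor classes on $S$ therefore accounts only for the series induced from $S$, not for all $g_m^s$ on $X$. The classical proof (Castelnuovo--Accola--Ciliberto) never leaves the curve: assuming $g_m^s\ne kg_d^r$, one estimates $\dim(g_m^s+jg_d^r)$ by repeatedly adding a general divisor of the simple base-point-free $g_d^r$ and applying the General Position Theorem (the mechanism behind \cite[Lemma 5.1]{a2}, used throughout Sections 4--6 here); the extra series gains at least one dimension over $(k+j)g_d^r$ at each of the $\chi-k$ remaining stages of Castelnuovo's count, which depresses the genus bound to $\pi(d,r)-\chi(d,r)+k$. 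If you want a workable proof, that purely numerical iteration is the road to take.
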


The second result is part of \cite[Osservatione 2.19]{ci}.

\begin{prop} \label{prop2.4}
Let $g_d^r$ be a base point free and simple linear series on $X$ such that
$$
g \geq \pi(d,r) - \chi(d,r) + 2.
$$
Then the $g_d^r$ is complete, $\dim(2g_d^r) = 3r-1$ and $X$ is mapped into a surface of degree $r-1$ in $\PP^r$
via the morphism given by the $g_d^r$. 
\end{prop}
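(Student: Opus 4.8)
The plan is to pass to a general hyperplane section of the image curve and to extract all three assertions from Castelnuovo's analysis of its Hilbert function, the genus hypothesis being exactly what forces the section to impose the minimal number of conditions on quadrics. We may assume $\chi(d,r)\geq 3$, i.e. $d\geq 3r-1$: for $\chi(d,r)=1$ the hypothesis is vacuous (it would force $g>\pi(d,r)$), and for $\chi(d,r)=2$ it forces $g=\pi(d,r)$, the classical extremal case, for which the three conclusions are known. Since the $g_d^r$ is base point free and simple, it defines a birational morphism of $X$ onto a nondegenerate integral curve $X'\subset\PP^r$ of degree $d$; write $L$ for the corresponding line bundle, and note that for a general hyperplane $H$ the section $\Gamma:=X'\cap H\subset H=\PP^{r-1}$ consists of $d$ distinct points in uniform position.

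I would first settle completeness. If the $g_d^r$ were incomplete, then $|L|$ would map $X$ birationally onto a nondegenerate curve of degree $d$ in $\PP^{r'}$ with $r'=h^0(L)-1\geq r+1$, so \eqref{eqcast} would give $g\leq\pi(d,r')$. Since $\pi(d,s)$ decreases as $s$ grows, $\pi(d,r')\leq\pi(d,r+1)$, and an elementary estimate gives $\pi(d,r+1)<\pi(d,r)-\chi(d,r)+2\leq g$, a contradiction. Hence the $g_d^r$ is complete, equivalently $H^1(\mathcal{I}_{X'}(1))=0$; as $X'$ is nondegenerate we also have $H^0(\mathcal{I}_{X'}(1))=0$, so the quadrics through $X'$ restrict isomorphically onto the quadrics through $\Gamma$.

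The technical core is to show that $\Gamma$ imposes exactly the minimal number $h_\Gamma(2)=2(r-1)+1=2r-1$ of conditions on quadrics. The lower bound $h_\Gamma(2)\geq 2r-1$ is Castelnuovo's lemma (here $d\geq 3r-1>2r-1$). For the reverse inequality I would combine the standard estimate $g\leq\sum_{n\geq 1}\bigl(d-h_\Gamma(n)\bigr)$ with the hypothesis: subtracting the identity $\pi(d,r)=\sum_{n\geq 1}\bigl(d-\min(d,\,n(r-1)+1)\bigr)$ yields $\sum_{n\geq 1}e(n)\leq\chi(d,r)-2$, where $e(n):=h_\Gamma(n)-\min(d,\,n(r-1)+1)\geq 0$. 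The key point is that an excess at level two propagates: the inductive form $h_\Gamma(n)\geq\min\bigl(d,\,h_\Gamma(n-1)+(r-1)\bigr)$ of Castelnuovo's lemma shows, by induction and because $n(r-1)+2\leq d$ for $n\leq\chi(d,r)$, that $e(2)\geq 1$ forces $e(n)\geq 1$ for every $n$ with $2\leq n\leq\chi(d,r)$, whence $\sum_n e(n)\geq\chi(d,r)-1$, contradicting the bound. Therefore $e(2)=0$, i.e. $h_\Gamma(2)=2r-1$.

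With this in hand the remaining assertions follow. The section $\Gamma$ lies on exactly $\binom{r+1}{2}-(2r-1)=\binom{r-1}{2}$ independent quadrics of $\PP^{r-1}$, and by the isomorphism of the second paragraph the same number of quadrics of $\PP^r$ contain $X'$. Hence the image of $\mathrm{Sym}^2 H^0(\mathcal{O}_{\PP^r}(1))\to H^0(\mathcal{O}_{X'}(2))$, which by completeness is precisely $2g_d^r$, has vector-space dimension $\binom{r+2}{2}-\binom{r-1}{2}=3r$, giving $\dim(2g_d^r)=3r-1$. Finally, since $d\geq 2r+1$ and $\Gamma$ imposes only $2r-1$ conditions on quadrics, Castelnuovo's lemma says that the quadrics through $\Gamma$ cut out a rational normal curve $C_\Gamma\subset\PP^{r-1}$ of degree $r-1$; this $C_\Gamma$ is the general hyperplane section of the common zero locus of the $\binom{r-1}{2}$ quadrics through $X'$, which is therefore an irreducible surface $S\subset\PP^r$ of degree $r-1$ containing $X'$. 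I expect this last step --- passing from the rational normal curves cut out on the sections to an honest surface of minimal degree through $X'$ --- to be the main obstacle, since it rests on the classical Castelnuovo structure theory rather than on a formal dimension count.
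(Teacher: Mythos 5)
The paper offers no proof of Proposition \ref{prop2.4} at all: it is quoted directly from Ciliberto's Osservazione 2.19 in \cite{ci}, so your argument is not competing with anything in the text but reconstructing the classical Castelnuovo machinery --- which is indeed what lies behind Ciliberto's statement. For $\chi(d,r)\ge 3$ your reconstruction is essentially the standard one and is sound: completeness via the monotonicity of $\pi(d,\cdot)$ together with $\pi(d,r+1)\le\pi(d,r)-\chi(d,r)+1$; the identity $\pi(d,r)=\sum_n\bigl(d-\min(d,n(r-1)+1)\bigr)$ combined with the uniform-position inequality $h_\Gamma(n)\ge\min\bigl(d,h_\Gamma(n-1)+r-1\bigr)$ to force $h_\Gamma(2)=2r-1$ (your propagation argument, using $n(r-1)+2\le d$ for $n\le\chi$, is correct); and the isomorphism $H^0(\mathcal{I}_{X'}(2))\cong H^0(\mathcal{I}_\Gamma(2))$, correctly derived from $h^0(\mathcal{I}_{X'}(1))=h^1(\mathcal{I}_{X'}(1))=0$, for the dimension count and for identifying the general hyperplane section of the base locus of the quadrics through $X'$ with the rational normal curve $C_\Gamma$. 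The final step you flag as the main obstacle is exactly the argument of \cite[III, \S 2]{acgh} and goes through as you sketch it.

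The one genuine flaw is your dismissal of the case $\chi(d,r)=2$. There the hypothesis forces $g=\pi(d,r)$, but the three conclusions are \emph{not} all ``known,'' and the third one actually fails when $d=2r$: an extremal curve of degree $2r$ in $\PP^r$ is a canonically embedded curve of genus $r+1$, and for $r\ge 4$ a general such curve lies on no surface of degree $r-1$ (a canonical curve on a surface of minimal degree is trigonal or a plane quintic). So either Ciliberto's original statement carries an implicit restriction such as $d\ge 2r+1$, or the transcription in the paper is slightly too generous; in any case your proof, like the classical one, genuinely needs $d\ge 2r+1$ for Castelnuovo's lemma on the hyperplane section, and for $2r+1\le d\le 3r-2$ you should say explicitly that the extremal-curve theory gives the conclusions rather than just that they are known. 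Since the proposition is only invoked in the paper under the hypothesis $d\ge 3r-1$, none of this affects anything downstream, but as a proof of the statement as literally transcribed, the $d=2r$ boundary case is a real gap (indeed a counterexample to the statement, not just to your argument).
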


\section{The gonality sequence}

Let now $g \geq 4$. For 
any positive integer $r$ the invariant $d_r$ of $X$ is defined as
$$
d_r = d_r(X) := \min \{ d \; |\; X \; \mbox{admits a linear series} \; g_d^r \}.
$$  
So $d_1$ is the gonality of $X$, $d_2$ is the minimal degree of a non-degenerate rational map $X \ra \PP^2$ etc.
The sequence $d_1,d_2,d_3,\cdots$ is called the {\it gonality sequence} of the curve $X$. The {\it Clifford index} 
of $X$ is defined as usual as
$$
\gamma := \min \{ d - 2r \; | \; X \; \mbox{admits a} \; g_d^r \; \mbox{with} \; r \geq 1 \; \mbox{and} \; d \leq g-1     \}.
$$
We say that a linear series $g_d^r$ {\it contributes to} $\gamma$ if $r \geq 1$ and $d \leq g-1$. We say that $d_r$
(respectively a $g_d^r$ or a line bundle defining it) {\it computes} $\gamma$ if in addition $\gamma = d_r - 2r$ (respectively 
$\gamma = d -2r$). 
For the following lemma see \cite[Lemmas 4.2 and 4.3]{ln}.

\begin{lem} \label{lem2.1}
{\em (a)} $d_r < d_{r+1}$ for all $r$;\\
{\em (b)} if a line bundle $L$ computes $d_r$, then $h^0(L) = r+1$ and $L$ is generated;\\  
{\em (c)} $d_{r+s} \leq d_r + d_s$ for any $r,s \geq 1$; in particular 
$$
d_r \leq r \cdot d_1 \leq r \cdot \frac{g+3}{2} \quad \mbox{for any} \quad  r;
$$
{\em (d)} If $d_r + d_s = d_{r+s}$, then $d_n = nd_1$ for all $n \leq r+s$.
\end{lem}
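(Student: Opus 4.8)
The plan is to prove the four parts in the order (a), (b), (c), (d), since each relies on its predecessors. The single observation behind (a) and (b) is that a line bundle of minimal degree among those with a prescribed number of sections is automatically base point free: if $L$ computes $d_r$ and some point $p$ were a base point of the complete system $|L|$, then $L(-p)$ would have degree $d_r-1$ and $h^0(L(-p))=h^0(L)\ge r+1$, producing a $g_{d_r-1}^r$ and contradicting the minimality of $d_r$. For (a), first note $d_r\le d_{r+1}$, since any $g_{d_{r+1}}^{r+1}$ contains a sub-series $g_{d_{r+1}}^r$ of the same degree. For strictness, let $L$ compute $d_{r+1}$; by the observation $L$ is base point free, so $h^0(L(-p))=h^0(L)-1\ge r+1$ for every $p$, whence $L(-p)$ defines a $g_{d_{r+1}-1}^r$ and $d_r\le d_{r+1}-1<d_{r+1}$. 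For (b), let $L$ compute $d_r$; the observation already gives that $L$ is generated, and if $h^0(L)\ge r+2$ then $L$ would define a $g_{d_r}^{r+1}$, forcing $d_{r+1}\le d_r$ against (a), so $h^0(L)=r+1$.

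For (c) I would use the product inequality on the integral curve $X$: for nonzero subspaces $V\subseteq H^0(L)$ and $W\subseteq H^0(M)$ one has $\dim(V\cdot W)\ge \dim V+\dim W-1$ inside $H^0(L\otimes M)$ (dividing by fixed nonzero sections, this is the statement that $\dim(UU')\ge \dim U+\dim U'-1$ for subspaces $U,U'$ of the function field, a domain). Taking $L,M$ to compute $d_r,d_s$ gives $h^0(L\otimes M)\ge (r+1)+(s+1)-1=r+s+1$, so $L\otimes M$ carries a $g_{d_r+d_s}^{r+s}$ and $d_{r+s}\le d_r+d_s$. Iterating with $s=1$ yields $d_r\le r\,d_1$, and combining with the classical Brill--Noether gonality bound $d_1\le \frac{g+3}{2}$ gives the displayed chain.

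For (d), the heart of the lemma, suppose $d_{r+s}=d_r+d_s$ and choose $L,M$ computing $d_r,d_s$. Then $L\otimes M$ has degree $d_{r+s}$ and $h^0\ge r+s+1$, so it computes $d_{r+s}$, and by (b) $h^0(L\otimes M)=r+s+1$; thus the inequality of (c) is an \emph{equality}, $\dim(H^0(L)\cdot H^0(M))=\dim H^0(L)+\dim H^0(M)-1$. The plan is to invoke the equality case of that inequality, which on an integral curve forces $\phi_L$ and $\phi_M$ to factor through one common pencil $\psi\colon X\to\PP^1$, with $L\cong\psi^{*}\mathcal O_{\PP^1}(r)$ and $M\cong\psi^{*}\mathcal O_{\PP^1}(s)$ (each map being $\psi$ followed by a rational normal curve). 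Writing $e=\deg\psi$, this gives $d_r=re$ and $d_s=se$; since $d_r\le r\,d_1$ and $e\ge d_1$, we get $e=d_1$, hence $d_r=r\,d_1$, $d_s=s\,d_1$ and $d_{r+s}=(r+s)d_1$. The full conclusion then follows by a sandwiching argument needing no further geometry: for any $0\le n\le r+s$, subadditivity (c) gives $d_{r+s}\le d_n+d_{r+s-n}$, while $d_n\le n\,d_1$ and $d_{r+s-n}\le (r+s-n)d_1$ give $d_n+d_{r+s-n}\le (r+s)d_1=d_{r+s}$; so both estimates are equalities and $d_n=n\,d_1$ for every $n\le r+s$.

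The main obstacle is precisely the equality case invoked in (d). The formal inequalities do not suffice: setting $e_n:=n\,d_1-d_n\ge 0$, subadditivity becomes superadditivity $e_{a+b}\ge e_a+e_b$, and one can have $e_{r+s}=e_r+e_s$ with some $e_n\ne 0$ (for instance $e=(0,0,1,1,2,\dots)$ satisfies $e_4=e_2+e_2$ yet $e_2\ne 0$), so linearity is not forced by the inequalities alone. Genuine geometric rigidity is indispensable, namely that extremal multiplication of global sections on an integral curve comes from a single pencil composed with rational normal curves. Making this statement precise and citing or proving it—a Castelnuovo/Hopf--Kneser type result on products of linear series—is the delicate point; everything else in the lemma is formal.
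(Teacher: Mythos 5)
Parts (a), (b) and (c) of your proposal are correct and complete: the base-point-freeness observation, the deduction $h^0(L)=r+1$ from (a), and the Hopf-type bound $\dim(V\cdot W)\ge\dim V+\dim W-1$ for subspaces of the function field all work as you describe. Note that the paper itself offers no proof of this lemma -- it is quoted from \cite[Lemmas 4.2 and 4.3]{ln} -- so there is no in-paper argument to compare against; your write-up of (a)--(c) is a legitimate self-contained substitute. You are also right, and it is a genuinely useful observation, that (d) cannot follow from the numerical relations alone (your superadditivity counterexample is convincing) and that some geometric rigidity in the equality case of the multiplication map is indispensable.

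That said, part (d) as written has a genuine gap: the entire argument rests on the claim that $\dim\bigl(H^0(L)\cdot H^0(M)\bigr)=h^0(L)+h^0(M)-1$ forces $\phi_L$ and $\phi_M$ to factor through a common pencil with $L\cong\psi^*\cO_{\PP^1}(r)$, $M\cong\psi^*\cO_{\PP^1}(s)$, and you neither prove this nor state it in a form precise enough to check (you explicitly defer it as ``the delicate point''). Since (d) is, as you say, the heart of the lemma, the proof is not complete. The good news is that the missing rigidity can be assembled from tools the paper already uses elsewhere, and in a weaker form than you ask for: by \cite[Lemma 5.1]{a2} (equivalently \cite[III, Exercise B-6]{acgh}), if $|L|$ were simple and $s\ge 1$ one would have $\dim(|L|+|M|)\ge r+2s>r+s$, so equality forces $|L|$ (for $r\ge 2$) to be compounded; by \cite[Lemma 5.3]{a2} a compounded series computing $d_r$ equals $r\,g^1_t$ for a pencil, and minimality gives $t=d_1$, hence $d_r=rd_1$, and symmetrically $d_s=sd_1$. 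One does not even need the two pencils to coincide: $d_{r+s}=d_r+d_s=(r+s)d_1$ already follows, and your final sandwich argument (which is correct) finishes the proof. I would urge you to replace the appeal to an unnamed Vosper/Kneser-type theorem for field extensions by these classical curve-theoretic lemmas, taking care of the edge cases $r=1$ or $s=1$ separately.
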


\begin{lem} \label{lem2.2}
{\em (a)} $d_r = r+g$ for $r \geq g$;\\
{\em (b)} $d_r = r + g -1$ for $g > r > g - d_1$;\\
{\em (c)} $d_r \leq g - \left[ \frac{g}{r+1} \right] + r$ and for a general curve we have
$$
d_r = g - \left[ \frac{g}{r+1} \right] + r;
$$
{\em (d)} $d_r \geq \min \{ \gamma +2r, g+r -1 \}$ for all $r$.
\end{lem}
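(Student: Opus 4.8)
The plan is to establish each of the four statements separately, using Riemann--Roch, Clifford's theorem, the strict monotonicity of Lemma \ref{lem2.1}(a), and Brill--Noether theory.

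I would first dispose of the upper bounds and of (a). A line bundle of degree $r+g\ge 2g-1$ is nonspecial with $h^0=d-g+1=r+1$, giving a $g^r_{r+g}$ and hence $d_r\le r+g$; conversely, if a $g^r_d$ existed with $d\le r+g-1$ then $h^1\ge(r+1)-d+g-1\ge 1$, so the underlying bundle is special and Clifford's theorem forces $d\ge 2r$, which for $r\ge g$ contradicts $d\le r+g-1\le 2r-1$. This proves (a). For the upper bound in (b) I would take $L=K-E$ (with $K$ the canonical divisor) and $E$ an effective divisor of degree $g-1-r\ge 0$; since points impose at most one condition on the canonical system, $h^0(L)\ge h^0(K)-\deg E=r+1$, and $\deg L=r+g-1$, so $d_r\le r+g-1$.

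The crux is the matching lower bound in (b). Suppose for contradiction that a $g^r_d$ with $d\le r+g-2$ exists, given by a bundle $L$ with $h^0(L)\ge r+1$. Riemann--Roch gives
$$
h^1(L)=h^0(L)-d+g-1\ge r+g-d\ge 2,
$$
so the residual series $|K-L|$ is a $g^{r'}_{2g-2-d}$ with $r'\ge r+g-d-1\ge 1$. Restricting to a subspace yields a $g^{r+g-d-1}_{2g-2-d}$, whence $d_{r+g-d-1}\le 2g-2-d$; on the other hand the strict monotonicity $d_1<d_2<\cdots$ of Lemma \ref{lem2.1}(a) gives $d_{r+g-d-1}\ge d_1+(r+g-d-2)$. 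Combining the two inequalities collapses to $d_1\le g-r$, contradicting the hypothesis $r>g-d_1$. I expect this interplay between residuation and the strict increase of the sequence to be the main obstacle, since neither Clifford's theorem nor Riemann--Roch alone suffices.

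For (c) I would invoke Brill--Noether theory. Writing $q=\left[\frac{g}{r+1}\right]$ and $d=g+r-q$, the Brill--Noether number is $\rho=g-(r+1)(g-d+r)=g-(r+1)q\ge 0$, so the existence theorem (Kempf, Kleiman--Laksov) produces a $g^r_d$ on \emph{every} curve, giving $d_r\le g-\left[\frac{g}{r+1}\right]+r$; sharpness for a general curve is the nonexistence half of Brill--Noether theory. Finally, for (d) I would take any $g^r_d$, given by $L$, and may assume $d\le g+r-2$, which as above forces $h^1(L)\ge 2$. If $d\le g-1$ then $L$ itself contributes to $\gamma$, so $\gamma\le d-2r$. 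If instead $g\le d\le g+r-2$, then $K-L$ has degree $2g-2-d\le g-1$ and $h^0(K-L)=h^1(L)\ge 2$, so $K-L$ contributes to $\gamma$; since the Clifford index is unchanged under residuation, $\gamma\le\deg(K-L)-2(h^0(K-L)-1)=d-2(h^0(L)-1)\le d-2r$. Either way $d\ge\gamma+2r$, so $d_r\ge\min\{\gamma+2r,\,g+r-1\}$.
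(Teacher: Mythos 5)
Your proposal is correct. For parts (a), (c) and (d) the paper simply states that they follow from Riemann--Roch, the definition of the Clifford index and Brill--Noether theory, and your arguments are precisely the standard ones intended there (in (d) you rightly pass to the residual series when $g\le d\le g+r-2$, which is needed because the paper's notion of ``contributing to $\gamma$'' requires degree at most $g-1$). The genuine divergence is in (b), the only part the paper proves in detail. The paper assumes $\dim|K_X-g^r_{d_r}|\ge 1$, deduces $2g-2-d_r\ge d_1$ from the residual pencil, and combines $d_r\ge 2r+\gamma$ with the Coppens--Martens inequality $\gamma\ge d_1-3$ to force $r\le g-d_1$; it then gets $d_r=r+g-1$ from $h^1=1$ and Riemann--Roch. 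You instead residuate a hypothetical $g^r_d$ with $d\le r+g-2$ and play the resulting bound $d_{r+g-d-1}\le 2g-2-d$ against the strict monotonicity $d_k\ge d_1+k-1$ from Lemma \ref{lem2.1}(a), which collapses to $d_1+r\le g$ and contradicts $r>g-d_1$; the upper bound $d_r\le r+g-1$ you obtain separately via $K_X-E$. Your route is more elementary and self-contained, avoiding the external input $\gamma\ge d_1-3$ entirely, while the paper's argument runs through the Clifford index, the viewpoint it exploits throughout (e.g.\ in part (d) and Proposition \ref{prop4.14}). Both arguments are valid.
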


\begin{proof}
Apart from (b) this follows from the Riemann-Roch theorem, the definition of the Clifford index and Brill-Noether theory.

Proof of (b): Assume that $\dim |K_X - g_{d_r}^r | \geq 1$. Then, by definition of $d_1$, 
$\deg |K_X - g_{d_r}^r | = 2g -2 -d_r \geq d_1$. Since $g_{d_r}^r$ contributes to $\gamma$, and from \cite[Theorem 2.3]{cm}
we know that $\gamma \geq d_1 - 3$. Consequently,
$$
2g-2 - d_1 \geq d_r \geq 2r + \gamma \geq 2r + d_1 - 3,
$$
which implies $r \leq g - d_1$. Hence, for $g > r > g-d_1$ we obtain $\dim |K_X -g_{d_r}^r | \leq 0$. 
But $g_{d_r}^r$ is certainly a special linear system. So $\dim |K_X -g_{d_r}^r | = 0$. 
Now Riemann-Roch
implies $d_r = r + g-1$.  
\end{proof}

For specific curves, even of ``small'' genus, it is in general not easy to compute its gonality sequence.

\begin{ex} \label{ex3.3}
{\em Let $X$ be a curve of genus $14$ with a $g_{13}^4$ computing its Clifford index $\gamma = 5$.
By \cite[3.2.2]{cm} the curve $X$ has gonality $d_1 = \gamma + 3 = 8$. Such curves exist and the 
$g_{13}^4$ is the only linear series on it computing $\gamma$ (see \cite[Example 3.2.7]{cm}). Hence
$d_2 \geq 10, \; d_3 = 12, \; d_4 = 13$ and the $g_{13}^4$ embeds $X$ into $\PP^4$. According to \cite[Lemma 4]{m1}
$X$ has trisecant lines in $\PP^4$. The projection with center such a line induces a $g_{10}^2$ on $X$. So $d_2 = 10$.
By Serre duality we get $d_5 = 16$ and $d_6 = 18$. Finally, for $r > 6 = g - d_1$ we know $d_r$ by 
Lemma \ref{lem2.2} (a) and (b). 
} $\hspace{2.1cm} \square$
\end{ex}

Note that using Serre duality it is easy to calculate the $d_r \geq g$ with $r \leq g - d_1$ provided all $d_s$ 
with $d_s < g$ are already known.\\

Let $M_g$ denote the moduli space of smooth projective curves of genus $g$ ($\geq 4$). Considering the invariants $d_r$ 
as functions $d_r: M_g \ra \ZZ$, we have

\begin{prop}
For any $r \geq 1$ the function $d_r: M_g \ra \ZZ$ is lower semi-continuous, i.e. may become smaller under specialisation.
\end{prop}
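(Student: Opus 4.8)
The plan is to reduce the assertion to the standard fact that Brill--Noether loci are closed in families. By the very definition of $d_r$, for a fixed integer $n$ we have $d_r(X) \leq n$ if and only if $X$ carries a linear series $g_n^r$, that is, a line bundle $L$ of degree $n$ with $h^0(L) \geq r+1$. Thus lower semi-continuity of $d_r$ is equivalent to the closedness, for every $n$, of the locus of curves admitting such a line bundle, and it is this closedness that I would establish.

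Since $M_g$ is only a coarse moduli space and carries no universal family, I would first pass to a parameter scheme. Fix an integer $N \geq 3$ and let $H$ be the smooth locally closed subscheme of an appropriate Hilbert scheme parametrising $N$-canonically embedded smooth curves of genus $g$, equipped with its tautological family $\pi: \mathcal{X} \ra H$; the induced morphism $\phi: H \ra M_g$ to the coarse moduli space is surjective and open. Because $\phi$ is surjective and open, a subset of $M_g$ is closed if and only if its preimage in $H$ is closed, and $\phi^{-1}(\{d_r \leq n\}) = \{h \in H \;|\; d_r(X_h) \leq n\}$. Hence it suffices to prove that each of these sets is closed in $H$, equivalently that $d_r \circ \phi$ is lower semi-continuous on $H$.

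Over $H$ I would form the relative Picard scheme $\Pic^n_{\mathcal{X}/H} \ra H$ of line bundles of relative degree $n$; since the fibres of $\pi$ are smooth projective curves, this morphism is proper. Let $\mathcal{P}$ be a (locally existing) Poincar\'e line bundle on $\mathcal{X} \times_H \Pic^n_{\mathcal{X}/H}$, choose a relatively ample divisor $D$ of large degree on $\mathcal{X}$, and push the exact sequence $0 \ra \mathcal{P} \ra \mathcal{P}(D) \ra \mathcal{P}(D)|_D \ra 0$ forward to $\Pic^n_{\mathcal{X}/H}$. As in the classical construction of Brill--Noether varieties (see \cite{acgh}), this exhibits the locus $W := W^r_n$ where $h^0 \geq r+1$ as the degeneracy locus of a morphism of vector bundles, so that $W$ is a closed subscheme of $\Pic^n_{\mathcal{X}/H}$.

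Finally, since $\Pic^n_{\mathcal{X}/H} \ra H$ is proper, the image of the closed set $W$ is closed in $H$; by construction this image equals $\{h \in H \;|\; X_h \text{ admits a } g_n^r\} = \{h \;|\; d_r(X_h) \leq n\}$. This yields the required closedness for every $n$ and hence the lower semi-continuity of $d_r$. The main obstacle is not geometric but bookkeeping about the moduli problem: one must take seriously that $M_g$ has no universal family and justify the descent from $H$ along $\phi$; the remaining ingredients, namely the properness of the relative Picard scheme and the determinantal description of $W^r_n$, are routine once the set-up is in place.
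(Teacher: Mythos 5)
Your proof is correct and rests on the same two pillars as the paper's own argument: rigidify the moduli problem so that a universal family and a relative Picard scheme exist, then invoke semicontinuity of $h^0$ in that family. The differences are in the execution. For the rigidification the paper passes to the moduli space $M_g^n$ of curves with level-$n$ structure and uses that $M_g^n \ra M_g$ is finite, whereas you use the Hilbert scheme $H$ of $N$-canonically embedded curves and the quotient map $\phi: H \ra M_g$; both work, though on your route one should state explicitly that the geometric quotient identifies closed saturated subsets of $H$ with closed subsets of $M_g$ (surjectivity plus openness of $\phi$ does give exactly this for saturated sets, as you use). For the main step the paper argues by specialization: it lifts a specialization $C \mapsto C_0$ to a specialization $L \mapsto L_0$ inside the projective relative Picard scheme and then applies upper semicontinuity of $x \mapsto h^0(L_x)$; you instead exhibit the relative Brill--Noether locus $W^r_n$ as a closed degeneracy locus of a map of vector bundles and push it forward along the proper morphism $\Pic^n_{\mathcal{X}/H} \ra H$. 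Your version is, if anything, slightly more airtight: deducing closedness of $M_{(g,r,d)}$ from ``stability under specialization'' tacitly requires knowing that this locus is constructible, a point the paper does not address and which your direct closed-image argument bypasses. In substance, though, the mathematical content --- projectivity/properness of the degree-$n$ Picard scheme plus semicontinuity of $h^0$ --- is identical in the two proofs.
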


\begin{proof}
We have to show that for any $d$ the set
$$
M_{(g,r,d)} := \{ X \in M_g \;|\; d_r(X) \leq d \}
$$
is closed in $M_g$. 

Fix an integer $n \geq 3$ and let $M_g^n$ denote the moduli variety of curves of genus $g$ with level-$n$ structure. There
exists a universal curve 
$$
\pi_g^n: \cC_g^n \ra M_g^n
$$ 
of genus $g$ with level-$n$ structure (which we omit in the notation, since we don't need it).   
According to \cite{gr}, for any integer $d$ the relative Picard scheme $\Pic^d_{\cC_g^n/M_g^n}$ is projective over 
$M_g^n$. Since it represents a functor, there exists a universal line bundle $\cL_d$ on 
$\cC_g^n \times_{M_g^d} \Pic^d_{\cC_g^n/M_g^n}$ of degree $d$. For any geometric point $x \in \Pic^d_{\cC_g^n/M_g^n}$ we denote by
$L_x$ the corresponding line bundle on the curve $C_{p(x)}$ corresponding to the point $p(x)$, where 
$p: \Pic^d_{\cC_g^n/M_g^n} \ra M_g^n$ denotes 
the structure morphism. Now it is well known that the function
$$
h^0: \left\{ \begin{array}{ccc}
             \Pic^d_{\cC_g^n/M_g^n} & \ra & \ZZ\\
             x & \mapsto & h^0(L_x)
             \end{array} \right.
$$
is upper semi-continuous.

Let $C \mapsto C_0$ denote a specialization in $M_g^n$.
Since the map $M_g^n \ra M_g$ is finite, it suffices to show that 
$$
d_r(C_0) \leq d_r(C).
$$
Let $L$ be a line bundle on $C$ computing $d_r$.  
Since $p$ is a projective morphism, the specialization $C \mapsto C_0$ lifts to a specialization $L \mapsto L_0$. By upper
semicontinuity of $h^0$ we have $h^0(L_0) \geq h^0(L)$, which implies that $d_r(C_0) \leq d_r(C)$. 
\end{proof}

\section{The slope inequality}

As an immediate consequence of Lemma \ref{lem2.2} (a),(b),(c) we get the following proposition.

\begin{prop} \label{prop3.1}
{\em (i)} $\frac{d_{g-1}}{g-1} = 2 = \frac{d_g}{g}$;\\
{\em (ii)} $\frac{d_r}{r} > \frac{d_{r+1}}{r+1}$ for any $r > g - d_1,\; r \neq g-1$;\\
{\em (iii)} $\frac{d_r}{r} > \frac{d_{r+1}}{r+1}$
for a general curve $X$ and any $r, \;1 \leq r \neq g- 1$.
\end{prop}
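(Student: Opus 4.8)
The plan is to handle the three parts separately, each driven by a different part of Lemma \ref{lem2.2}, around one common observation: whenever $d_r = r + c$ for a fixed positive constant $c$ across a block of consecutive indices, the quotient $d_r/r = 1 + c/r$ is strictly decreasing there, so the strict slope inequality holds for free on that block. Parts (i) and (ii) then reduce to bookkeeping about which constant $c$ is in force, whereas part (iii) needs one short computation with a floor function.

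For (i) I would just substitute: Lemma \ref{lem2.2}(b) applies at $r=g-1$ (the condition $g > g-1 > g-d_1$ holds as $d_1 \geq 2$) and gives $d_{g-1}=2g-2$, while Lemma \ref{lem2.2}(a) gives $d_g = 2g$; both quotients equal $2$. For (ii) I would split $\{\,r > g-d_1,\ r \neq g-1\,\}$ into $r \geq g$, where $r$ and $r+1$ both obey Lemma \ref{lem2.2}(a) with $c=g$, and $g-d_1 < r \leq g-2$, where both obey Lemma \ref{lem2.2}(b) with $c=g-1$; in each block the monotonicity of $1+c/r$ yields the strict inequality. The single index not covered, $r=g-1$, is precisely where the constant jumps from $g-1$ to $g$, and it is exactly the equality recorded in (i).

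For (iii) I would use the closed formula $d_r = g + r - \lfloor g/(r+1)\rfloor$ from Lemma \ref{lem2.2}(c), valid for all $r$ on a general curve. Since $d_r/r > d_{r+1}/(r+1)$ is equivalent to $(r+1)d_r - r\,d_{r+1} > 0$, I would expand and watch the linear terms cancel, leaving
$$
(r+1)d_r - r\,d_{r+1} = g - (r+1)\left\lfloor \tfrac{g}{r+1} \right\rfloor + r\left\lfloor \tfrac{g}{r+2} \right\rfloor .
$$
Writing $g = (r+1)\lfloor g/(r+1)\rfloor + s$ with $0 \leq s \leq r$, the first two terms collapse to $s$, so the right-hand side is $s + r\lfloor g/(r+2)\rfloor \geq 0$. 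There is no real obstacle here; the only delicate point is pinning down the equality case. Vanishing forces both $s=0$ and $\lfloor g/(r+2)\rfloor = 0$, i.e. $(r+1)\mid g$ together with $r \geq g-1$; since $r+1 > g$ once $r \geq g$, this can occur only at $r = g-1$. Hence the inequality is strict for every $r \neq g-1$, matching (i) at the exceptional index.
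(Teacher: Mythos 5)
Your proposal is correct and is essentially the paper's argument made explicit: the paper simply states that the proposition is an immediate consequence of Lemma \ref{lem2.2}\,(a),(b),(c), and your three parts flesh out exactly that — (a) and (b) give the constant blocks $d_r=r+g$ and $d_r=r+g-1$ for (i) and (ii), and the general-curve formula in (c) gives (iii) via the floor-function computation, with the equality case correctly isolated at $r=g-1$.
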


We call the inequality 
\begin{equation} \label{eq3.1}
\frac{d_r}{r} \geq \frac{d_{r+1}}{r+1}
\end{equation} 
a {\it slope inequality} (for the gonality sequence) and we say that $X$ {\it satisfies the slope inequalities}
if \eqref{eq3.1} is valid for all $r \geq 1$.  By Proposition \ref{prop3.1} this is valid for general curves. 
It is not difficult to see from \cite[Remark 5.5]{ln} that also hyperelliptic, trigonal, general tetragonal and bielliptic curves
satisfy the slope inequalities.
The following proposition shows that this is also true for general 
pentagonal curves.

\begin{prop} \label{prop4.2}
For a general pentagonal curve $X$ the gonality sequence is given by
$$
d_r = \left\{ \begin{array}{ccc}
              5r & & r \leq \lfloor \frac{g-3}{5} \rfloor,\\
              \lceil \frac{5r +  g -3}{2} \rceil &\quad \mbox{for}\quad& \frac{g-3}{5} < r \leq \lfloor \frac{g-1}{5} \rfloor,\\
              r+g-1 - \lfloor \frac{g-r-1}{4} \rfloor &\quad \mbox{for} \quad & \frac{g-1}{5} < r \leq g-1,\\
              r+g && r \geq g.
              \end{array} \right.
$$
In particular, $X$ satisfies the slope inequalities.
\end{prop}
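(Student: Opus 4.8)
The plan is to first determine the full gonality sequence and then read off the slope inequalities from the resulting formula by a finite comparison of neighbouring terms. For a general pentagonal curve $X$ one has $d_1 = 5$, Clifford index $\gamma = 3$ and Clifford dimension $1$, the value $\gamma$ being computed by the (for general $X$ unique) pencil $|F| = g_5^1$. The four cases of the formula correspond, respectively, to multiples of $F$, to a short transition range around $5r \approx g$, to the Serre-dual regime, and to the non-special regime $r \ge g$ already settled by Lemma \ref{lem2.2}(a),(b).

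For the upper bounds I would argue by explicit construction. The multiple $rF$ satisfies $h^0(rF) = r+1$ for $r \le \lfloor (g-1)/5\rfloor$ --- here one uses that $|F|$ is a primitive pencil on the general pentagonal curve --- whence $d_r \le 5r$, sharp in the first range. For the third range, given $r$ there set $m = \lfloor (g-r-1)/4\rfloor$; the residual series $|K_X - mF|$ has dimension $g-4m-1 \ge r$, and imposing $g-4m-1-r$ general base points yields a $g_d^r$ whose moving part has degree $d = r+g-1-m$, which is exactly the claimed value. The single transition index of the second range carries the value $\lceil (5r+g-3)/2 \rceil$; when $5r = g-2$ this is again realised by $rF$, while when $5r = g-1$ it is strictly smaller than $5r$ and requires producing a special series of degree $g-2$, a genuine existence statement.

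For the lower bounds --- the essential point --- the estimate $d_r \ge \min\{2r+3,\, g+r-1\}$ from Lemma \ref{lem2.2}(d) is far too weak in the first range (it gives only $2r+3$ against $5r$). One must instead use the geometry of special series on a general pentagonal curve: every special $g_d^r$ of small degree is cut out by the rational normal scroll swept out by the $3$-planes that the fibres of $|F|$ span in the canonical model, which forces $d \ge 5r$ in the first range and the stated minima in the remaining ranges. Concretely I would invoke the emptiness of the Brill-Noether loci $W_d^r(X)$ below the claimed bounds for a general $k$-gonal curve, in the spirit of \cite{ln}.

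Finally the slope inequalities follow from the formula. Within the first range $d_r/r = 5$ is constant, so \eqref{eq3.1} holds with equality; in the third range $d_r \ge 2r$ gives $d_r/r \ge 2$ while $d_{r+1} - d_r \in \{1,2\}$, so \eqref{eq3.1} holds there as well, the subcase $r > g-5$, $r \ne g-1$ being already contained in Proposition \ref{prop3.1}(ii) and $r = g-1$ in Proposition \ref{prop3.1}(i). It then remains only to check the two transitions between the ranges, a direct computation. The main obstacle is the lower bound in the transition range, where the minimal series is neither a multiple nor a residual of $|F|$, so that excluding series of smaller degree genuinely requires the Brill-Noether theory of the general pentagonal curve.
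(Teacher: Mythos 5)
Your proposal is correct and takes essentially the same route as the paper, which likewise obtains the values for $r\le\lfloor\frac{g-1}{5}\rfloor$ from the known description of special linear series on a general pentagonal curve, extends to larger $r$ by Serre duality and Riemann--Roch, and checks the slope inequalities by direct computation. The one adjustment: the external input you leave as a black box (emptiness of the relevant Brill--Noether loci for a general pentagonal curve) is supplied by \cite{p}, not \cite{ln}.
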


\begin{proof}
This follows from \cite{p} and, for $r > \frac{g-1}{5}$, Serre duality and Riemann-Roch. The proof of 
the last assertion is an immediate computation. 
\end{proof}

For smooth plane curves we have however,

\begin{prop} \label{prop3.2}
Let $X$ be a smooth plane curve of degree $d \geq 5$. Then the slope inequality holds for all $r$ except if 
$$
r = \frac{\alpha(\alpha +3)}{2}  \quad \mbox{with} \quad 1 \leq \alpha \leq d - 4,
$$
in which case
$$
d_r = \alpha d \quad \mbox{and} \quad d_{r+1} = (\alpha+1)d - (\alpha +1). 
$$
So the slope ineqality \eqref{eq3.1} is not valid for these values of $r$.
\end{prop}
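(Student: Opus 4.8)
The plan is to determine the complete gonality sequence of a smooth plane curve $X \subset \PP^2$ of degree $d$, and then check the slope inequality directly. The key structural fact, which I would invoke as the starting point, is that every line bundle $L$ computing some $d_r$ on a smooth plane curve is a multiple of the hyperplane class together with points, more precisely that the linear series computing the $d_r$ are obtained from the complete series $|kH|$ cut out by curves of degree $k$ (where $H = \cO_X(1)$) possibly after subtracting base points. This is the analogue for plane curves of the statements in Theorem \ref{thm2.3} and Proposition \ref{prop2.4}, and it is classical that on a smooth plane curve the only linear series computing the relevant invariants come from the restriction of $\cO_{\PP^2}(k)$. So first I would recall that $|kH|$ is a $g^{r_k}_{kd}$ with $r_k = \binom{k+2}{2} - 1 = \frac{k(k+3)}{2}$ for $k \leq d-3$ (by the exact sequence $0 \to \cO_{\PP^2}(k-d) \to \cO_{\PP^2}(k) \to \cO_X(k) \to 0$, the restriction map on global sections is an isomorphism precisely when $k \leq d-1$, and one must track when $h^1$ contributions appear for $k$ near $d$).

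Next I would use these base series to fill in the gonality sequence between the ``jump'' dimensions. For $r$ strictly between two consecutive values $\frac{\alpha(\alpha+3)}{2}$ and $\frac{(\alpha+1)(\alpha+4)}{2}$, one realizes a $g^r_{d_r}$ by taking the complete series $|(\alpha+1)H|$ of degree $(\alpha+1)d$ and projecting, i.e.\ subtracting an appropriate effective divisor (base points) so as to drop the dimension from $r_{\alpha+1}$ down to $r$. Subtracting one point lowers the dimension by exactly one and the degree by one, as long as one stays above $\alpha$, so the minimal degree achieving dimension $r$ in this range is $d_r = (\alpha+1)d - (r_{\alpha+1} - r)$. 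In particular at the exact jump value $r = \frac{\alpha(\alpha+3)}{2} = r_\alpha$ the minimal realization is the full series $|\alpha H|$, giving $d_r = \alpha d$ with no points subtracted, while the very next value $r+1 = r_\alpha + 1$ forces a jump to $|(\alpha+1)H|$ minus $(r_{\alpha+1} - r_\alpha - 1) = \alpha$ base points, yielding $d_{r+1} = (\alpha+1)d - \alpha$. I would note the stated formula $d_{r+1} = (\alpha+1)d - (\alpha+1)$ differs from this by one; I would reconcile this by checking the precise count of base points to be subtracted and whether a further reduction (subtracting one more point while still keeping dimension $r+1$) is possible, which turns on the geometry of points imposing independent conditions on curves of degree $\alpha+1$.

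With the gonality sequence in hand, the slope inequality becomes a finite bookkeeping check. For a generic $r$ inside a range (not at a jump), consecutive terms differ by $d_{r+1} - d_r = 1$, and one verifies $\frac{d_r}{r} \geq \frac{d_{r+1}}{r+1}$ holds because $d_r/r \geq 1$ comfortably. The inequality can only fail where $d_{r+1} - d_r$ is large relative to $d_r/r$, which is exactly at the jump values $r = r_\alpha$: there the degree leaps from $\alpha d$ to roughly $(\alpha+1)d - \alpha$, an increase of about $d - \alpha$, and I would compute that $\frac{d_r}{r} - \frac{d_{r+1}}{r+1}$ is negative precisely for $1 \leq \alpha \leq d-4$. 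The main obstacle I anticipate is twofold: first, establishing rigorously that no linear series cheaper than the ones coming from $|kH|$ (minus points) can exist, i.e.\ the uniqueness/classification of the computing series on a plane curve, which requires a Castelnuovo-type argument controlling how $h^0$ can grow; and second, getting the boundary behavior exactly right near $r = r_\alpha$ for small $\alpha$ (where the naive base-point count and the speciality of $|kH|$ interact) and near $\alpha = d-4$, where one must confirm the range endpoint is sharp. Handling these edge cases cleanly, rather than the generic interior computation, is where the real care is needed.
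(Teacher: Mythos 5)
Your overall strategy is the same as the paper's: the proposition is there deduced directly from Noether's theorem (\cite[Theorem 3.14]{ci}), which says that for $r = \frac{\alpha(\alpha+3)}{2} - \beta$ with $\alpha \geq 1$, $0 \leq \beta \leq \alpha$ and $r < g$ one has $d_r = \alpha d - \beta$; this is precisely the classification you invoke at the outset (every series computing a $d_r$ on a smooth plane curve comes from $|kH|$ minus points), so citing it as classical is legitimate and is exactly what the paper does. The genuine problem is in your execution at the jump. Writing $r_k = \frac{k(k+3)}{2}$, you have $r_{\alpha+1} - r_\alpha = \alpha + 2$, so to realize dimension $r_\alpha + 1$ starting from the complete series $|(\alpha+1)H|$ you must subtract $r_{\alpha+1} - (r_\alpha + 1) = \alpha + 1$ points, not $\alpha$ as you wrote. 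The discrepancy you notice between your value $(\alpha+1)d - \alpha$ and the stated $(\alpha+1)d - (\alpha+1)$ is therefore not a subtlety about base points imposing independent conditions; it is a miscount, and as it stands your proposal does not actually establish the formula for $d_{r+1}$ that the proposition asserts (and on which the failure of the slope inequality depends). Once the count is corrected, the value falls out of Noether's formula with $\beta = \alpha+1$ applied to the parameter $\alpha+1$.

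Two smaller points. First, the decisive inequality is only asserted, not computed: with the correct values one has $\frac{d_r}{r} = \frac{2d}{\alpha+3}$ and $\frac{d_{r+1}}{r+1} = \frac{2(\alpha+1)(d-1)}{(\alpha+1)(\alpha+2)} = \frac{2(d-1)}{\alpha+2}$, and $\frac{d}{\alpha+3} < \frac{d-1}{\alpha+2}$ is equivalent to $\alpha < d-3$, i.e.\ $\alpha \leq d-4$, which also shows the stated range is sharp (for $\alpha = d-3$ one is at $r = g-1$, where the two slopes are equal). Second, for the non-jump values of $r$ the verification is that $d_{r+1} = d_r + 1$ (immediate from Noether's formula), so the slope inequality reduces to $d_r \geq r$, which certainly holds. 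If you prefer not to cite Noether's theorem and instead to prove the classification of computing series yourself, as your last paragraph suggests, that is a substantial separate task (a Castelnuovo-type argument) which your proposal names but does not carry out; the paper avoids it entirely by the citation.
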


\begin{proof}
Noether's Theorem \cite[Theorem 3.14]{ci} says that
$$
 d_r = \left\{ \begin{array}{ccl}
                \alpha d - \beta & \textit{if} & r < g = \frac{(d -1)(d - 2)}{2},\\
                r + \frac{(d -1)(d - 2)}{2}  & \textit{if} & r \geq g.
                \end{array} \right.
 $$      
where $\alpha$ and $\beta$ are the uniquely determined integers with 
 $\alpha \geq 1$ and $0 \leq \beta \leq \alpha$ such that
 $$
 r = \frac{\alpha(\alpha + 3)}{2} - \beta.
 $$
In particular we have $d_1 = d - 1, \; d_2 = d,\; d_{g-1} = 2g-2,\; d_g = 2g$. Moreover, apart from $r = 1,g-1$ and
the exceptional values of the proposition we have $d_{r+1} = d_r + 1$. So for all these values of $r$ the slope 
inequality holds.

Now let $r = \frac{\alpha(\alpha +3)}{2}  \;\mbox{with} \; 1 \leq \alpha \leq d - 4$. Then 
$$
r < \frac{(d-3)d}{2} = g -1
$$ 
which gives the values of $d_r$ and $d_{r+1}$. Now $\frac{d_r}{r} < \frac{d_{r+1}}{r+1}$ is equivalent to
$\alpha^2 + (4-d)\alpha + (3-\alpha) < 0$ which is true, since $\alpha \leq d -4$.
\end{proof}

\begin{rem} {\em
Note that the biggest $r$ violating the slope inequality in Proposition \ref{prop3.2} is
$$
r = \frac{(d-4)(d -1)}{2} = g - (d -1) = g - d_1.
$$ 
So in this case part (ii) of Proposition \ref{prop3.1}
is best possible.

This is, however, the only case:} If $X$ is not a smooth plane curve, we always have 
$$
d_{g-d_1} = 2g-2 -d_1 = (g-d_1) + g -2
$$
which implies that $\frac{d_r}{r} \geq \frac{d_{r+1}}{r+1}$ for $r = g -d_1$, by Lemma {\em \ref{lem2.2} (b).

In fact, the dual of a pencil $g_{d_1}^1$ is a series of degree $2g-2-d_1$ and dimension $g-d_1$. Assume that 
$d_{g-d_1} < 2g-2-d_1$. Then we can find a series of degree $2g-3-d_1$ and dimension $g-d_1$. But its dual is a $g_{d_1+1}^2$ 
which is very ample, since otherwise the subtraction of two appropriate points of $X$ would give us a $g_{d_1 -1}^1$ on $X$.  
Hence $X$ is a smooth plane curve of degree $d_1+1$.
} $\hspace{8cm} \square$
\end{rem}

Now let $X \subset \PP^r$ be an extremal curve of degree $3r-1$. We may assume that $X$ 
is not isomorphic to a smooth plane curve. According to 
\cite[Section III, Corollary 2.6 (iii)]{acgh}, $X \subset \PP^r$ is a semicanonical curve of genus genus $g = 3r = d+1$ 
(see also \cite{a1} for these curves).

\begin{prop} \label{prop3.3}
With these assumptions we have
$$
d_r = d= g-1 \; \mbox{and} \;\; d_{r+1} = g+2 \; \mbox{for any} \; r \geq 2. 
$$
\end{prop}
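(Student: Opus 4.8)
The plan is to pin both numbers down by combining the semicanonical identity $K_X=2L$, where $L=g^r_{3r-1}$ is the embedding line bundle and $g=3r$, with the rigidity statement of Theorem \ref{thm2.3}. Note first that $\chi(d,r)=3$ and $\pi(d,r)=g$ for $d=3r-1$. The bound $d_r\le g-1$ is immediate from the existence of $L$. For the reverse inequality I would apply Theorem \ref{thm2.3} to $L$ (which is base point free and simple, being very ample) with $k=1$: the hypotheses then read $m\le d=3r-1$ and $s\ge r$, and since the alternative $g\le\pi(d,r)-\chi(d,r)+1=g-2$ is absurd, every $g^s_m$ with $s\ge r$ and $m\le 3r-1$ must equal $L$. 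In particular there is no $g^r_m$ with $m<3r-1$, so $d_r\ge 3r-1$ and hence $d_r=g-1$.

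For the upper bound $d_{r+1}\le g+2$ I would use a trisecant line of $X\subset\PP^r$, which exists by \cite{m1} (as in Example \ref{ex3.3}), or because $X$ meets in three points the rulings of the degree $r-1$ surface carrying it (Proposition \ref{prop2.4}). If $D$ is the divisor of three collinear points, then $D$ imposes only two conditions on $|L|$, so $h^0(L-D)=r-1$; Riemann--Roch together with $K_X=2L$ then gives $h^0(L+D)=h^0(L-D)+3=r+2$, i.e. a $g^{r+1}_{3r+2}$, whence $d_{r+1}\le g+2$.

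The substance is the lower bound $d_{r+1}\ge g+2$. Theorem \ref{thm2.3} with $k=1$ again rules out any $g^{r+1}_m$ with $m\le 3r-1$, since such a series would be $L$, which has the wrong dimension. The value $m=3r$ is excluded by passing to the Serre dual: a complete $g^{r+1}_{3r}$ dualises to a $g^r_{3r-2}$, which by Theorem \ref{thm2.3} must be $L$, impossible for degree reasons. This leaves the single critical value $m=3r+1$, and ruling it out is exactly what is needed, since $d_{r+1}=3r+1$ would still satisfy the slope inequality. Equivalently, by Serre duality, I must show that $X$ carries no $g^{r-1}_{3r-3}$.

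To exclude such an $N=g^{r-1}_{3r-3}$ I would apply Proposition \ref{prop2.4} to $N$ (its hypothesis $g\ge\pi(3r-3,r-1)-\chi(3r-3,r-1)+2$ holds for all $r\ge 3$), obtaining $\dim|2N|=3r-4$, i.e. $h^0(2N)=3r-3$. Riemann--Roch then gives $h^0(K_X-2N)=2$, so $B:=K_X-2N=2(L-N)$ is a $g^1_4$ equal to twice the degree two class $L-N$. When $L-N$ is effective, say $L-N\sim p+q$, this is contradictory at once: very ampleness of $L$ forces $h^0(N)=h^0(L-p-q)=r-1$, not $r$. The main obstacle is the residual configuration in which $L-N$ is not effective (together with the parallel issue of a non-simple or non-base-point-free $N$); there I expect to argue through the tetragonal scroll attached to $B=g^1_4$, iterating the base point free pencil trick to control the drop of $h^0(N-kB)$ against the Maroni structure and so contradict $h^0(N)=r$, or else to invoke the extremal classification of \cite{acgh}, \cite{a1} to exclude the small pencils these curves do not possess. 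This borderline analysis, rather than any of the preceding steps, is where the real difficulty lies.
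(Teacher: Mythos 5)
Your reductions are sound and partly more economical than the paper's: the identity $d_r=g-1$ via Theorem \ref{thm2.3} with $k=1$ (using $\pi(3r-1,r)=3r=g$, $\chi=3$) is a clean alternative to the paper's route, and your trisecant computation $h^0(L+D)=h^0(L-D)+3=r+2$ with $K_X=2L$ correctly gives $d_{r+1}\le g+2$ (the paper instead projects from the $4$-secant line cut by the scroll ruling to get a $g^{r-2}_{g-4}$ and dualizes; the two are essentially equivalent). Your elimination of $m\le 3r$ and the reduction of the remaining case $d_{r+1}=g+1$ to the non-existence of a $g^{r-1}_{3r-3}$ are also correct.

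But the proof is not complete, and the gap sits exactly at the step you flag as ``where the real difficulty lies.'' Your plan for excluding $N=g^{r-1}_{3r-3}$ runs through Proposition \ref{prop2.4}, which requires $N$ to be base point free and simple, and even in the good case it only reduces you to analyzing the degree-$2$ class $L-N$ with $2(L-N)$ a $g^1_4$; when $L-N$ is not effective (or $N$ is compounded or has base points) you offer only a sketch of possible strategies, not an argument. So the essential inequality $d_{r+1}\ge g+2$ is not established. The paper disposes of all these cases at once with a single application of Accola's Lemma \cite[Lemma 5.1]{a2}: for the simple series $g^r_{g-1}$ and \emph{any} $g^{r-1}_{g-3}$ one has $\dim|g^r_{g-1}+g^{r-1}_{g-3}|\ge r+2(r-1)=3r-2=g-2$, while the degree is $2g-4$; the series must be special (otherwise Riemann--Roch gives dimension $g-3$), so Clifford's theorem forces equality $\dim = g-2$, which only happens for hyperelliptic curves --- impossible here. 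No hypothesis on $N$ beyond its degree and dimension is needed, so no case analysis arises. This is the one idea your proposal is missing; with it, the borderline analysis you anticipate simply disappears.
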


An immediate consequence is,

\begin{cor} \label{cor3.4}
For any $r \geq 2$ and any extremal curve $X$ of degree $d = 3r-1$ in $\PP^r$ we have
$$
\frac{d_r}{r} < \frac{d_{r+1}}{r+1},
$$
i.e. \eqref{eq3.1} is violated.
\end{cor}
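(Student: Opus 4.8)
The plan is to read off the two relevant gonality values from Proposition~\ref{prop3.3} and then simply compare the two slopes. Since $X \subset \PP^r$ is an extremal curve of degree $d = 3r-1$, it is a semicanonical curve of genus $g = 3r$, so Proposition~\ref{prop3.3} gives $d_r = g-1 = 3r-1$ and $d_{r+1} = g+2 = 3r+2$ for every $r \geq 2$. Substituting these two numbers into the two sides of \eqref{eq3.1} is all that remains to do.

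Concretely, first I would pull a $3$ out of each quotient, writing
$$
\frac{d_r}{r} = \frac{3r-1}{r} = 3 - \frac{1}{r}, \qquad \frac{d_{r+1}}{r+1} = \frac{3r+2}{r+1} = 3 - \frac{1}{r+1}.
$$
Because $r < r+1$ forces $\frac{1}{r} > \frac{1}{r+1}$, the second quotient is strictly the larger, i.e. $\frac{d_r}{r} < \frac{d_{r+1}}{r+1}$, which is precisely the failure of \eqref{eq3.1}. Equivalently, one may clear denominators and observe that $(r+1)d_r = 3r^2 + 2r - 1$ falls short of $r\,d_{r+1} = 3r^2 + 2r$ by exactly one; this single unit is what breaks the inequality.

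There is essentially no obstacle, since all the geometric content already sits inside Proposition~\ref{prop3.3} and the corollary is a one-line arithmetic consequence. The only point that deserves a word is the case $r = 2$: an extremal curve of degree $5$ in $\PP^2$ is necessarily a smooth plane quintic and hence lies outside the hypothesis under which Proposition~\ref{prop3.3} is proved. For such a curve, however, Proposition~\ref{prop3.2} (applied with $d = 5$, $g = 6$) yields $d_2 = 5 = g-1$ and $d_3 = 8 = g+2$, exactly the same two values, so the comparison above carries over verbatim and \eqref{eq3.1} fails there too. For $r \geq 3$ no extremal curve of degree $3r-1$ in $\PP^r$ can be a plane curve, and Proposition~\ref{prop3.3} applies directly.
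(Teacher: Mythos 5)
Your overall strategy --- read $d_r = g-1$ and $d_{r+1} = g+2$ off Proposition~\ref{prop3.3}, do the one-line arithmetic $3 - \frac{1}{r} < 3 - \frac{1}{r+1}$, and treat the plane-curve case separately via Proposition~\ref{prop3.2} --- is exactly the paper's, and both your arithmetic and your treatment of $r=2$ are correct. The gap is your closing claim that for $r \geq 3$ no extremal curve of degree $3r-1$ in $\PP^r$ can be isomorphic to a smooth plane curve. That is false for $r=5$: a smooth plane septic, re-embedded by $|2g_7^2|$, is a curve of degree $14 = 3\cdot 5 - 1$ and genus $15 = \pi(14,5)$ in $\PP^5$, hence an extremal curve of precisely the type the corollary covers. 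This case genuinely escapes Proposition~\ref{prop3.3}, which is stated only under the standing assumption that $X$ is \emph{not} isomorphic to a smooth plane curve; and it is not a mere matter of justification, since your claimed value $d_{r+1} = g+2$ actually fails there: by Noether's theorem (Proposition~\ref{prop3.2} with $d=7$, $\alpha = 2$) the septic has $d_5 = 14 = g-1$ but $d_6 = 18 = g+3$, not $17$.

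The repair is the same move you already made for $r=2$: Proposition~\ref{prop3.2} applied to the plane septic exhibits $r=5$ as one of its exceptional values, with $\frac{d_5}{5} = \frac{14}{5} < \frac{18}{6} = \frac{d_6}{6}$, so the slope inequality is still violated. With that one case added (and the observation, taken from \cite{acgh}, that $r=2$ and $r=5$ are the \emph{only} values for which such an extremal curve can be a plane curve), your proof coincides with the paper's.
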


\begin{proof}
For $r =2$ this is a special case of Proposition \ref{prop3.2}. The only other case (see \cite{acgh}) where $X$ may be isomorphic 
to a smooth plane curve is $r=5$: a smooth plane septic is, by $2g_7^2$, also an extremal curve of degree $14$ in $\PP^5$. 
In this case the assertion is again a special case of 
Proposition \ref{prop3.2}.

So we may assume $r \geq 3$ and $X$ is not isomorphic to a smooth plane curve.
According to \cite[Section III, Theorem 2.5]{acgh}) $X$ lies on a rational normal scroll
surface, whose ruling sweeps out a $g_4^1$ on $X$. In particular $X$ admits a 4-secant line $\ell$. 
Projection with center $\ell$ induces a $g_{g-5}^{r-2}$. Consequently, $X$ has also a $g_{g-4}^{r-2}$ and thus 
by dualizing a $g_{g+2}^{r+1}$.

Assume that $X$ admits a $g_{g+1}^{r+1}$. By dualizing, $X$ has a $g_{g-3}^{r-1}$. 
But then, by \cite[Lemma 5.1]{a2}, 
\begin{equation} \label{eq4.2}
\dim |g_{g-1}^r + g_{g-3}^{r-1}| \geq r + 2(r-1),
\end{equation}
which contradicts Clifford's Theorem (since $g = 3r$). Hence $X$ does not admit a $g_{g+1}^{r+1}$ which implies $d_{r+1} = g+2$. 

Since $X$ has no $g_{g+1}^{r+1}$, it does not admit a $g_{g}^{r+1}$. Dualizing we see that $X$ has no 
$g_{g-2}^r$. This implies $d_r = g-1$.
\end{proof}

\begin{ex} \label{ex3.5}
Let $X$ be an extremal curve of degree $8$ in $\PP^3$. Then $g=9$ and the gonality sequence of $X$ is
\begin{center}
\begin{tabular}{|c||c|c|c|c|c|c|c|c|c|}\hline
$r$&$1$&$2$&$3$&$4$&$5$&$6$&$7$&$8$&$r \geq 9$\\
\hline
$d_r$&$4$&$7$&$8$&$11$&$12$&$14$&$15$&$16$&$r+9$\\
\hline
\end{tabular}
\end{center}
In particular, $\frac{d_3}{3} = \frac{8}{3} < \frac{d_4}{4} = \frac{11}{4}$.
\end{ex}

\begin{proof}
Since $X$ is tetragonal, we have $d_1=4$. From \eqref{eq4.2} we conclude that $d_2 = 7$ and by 
Proposition \ref{prop3.3}, $d_3 = 8$ and $d_4 = 11$. The dual series of a $g_4^1$ is a $g_{12}^5$. This implies
$d_5 = 12$ using Lemma \ref{lem2.1} (a).
The other assertions follow from Lemma \ref{lem2.2} (a) and (b).
\end{proof}

Note that for a general tetragonal curve of genus 9 we have $d_2 = 8$ and $d_3 = 10$, whereas the other values of $d_r$ coincide 
with the those of Example \ref{ex3.5} (see \cite[Remark 4.5 (c)]{ln}). On the other hand, for a bielliptic curve of genus 9, 
the values of $d_r$ are as in Example \ref{ex3.5}, apart form $d_2 = 6$ and $d_4 = 10$.\\

It is not difficult to see that, apart from smooth plane quintics, all curves of genus $g \leq 8$ satisfy 
the slope inequalities.\\ 

There is a more general principle showing that the curves of Propositions \ref{prop3.2} and \ref{prop3.3} do not satisfy 
all slope inequalities, namely the simple

\begin{lem} \label{lem3.6}
Let $X$ be a curve admitting a $g_d^r$ with $d \geq 2r-1 \geq 3$ such that \\
{\em (1)} $d_{r-1} = d-1$ and\\
{\em (2)} $2d \leq g+3r -2$.

Then $d_r = d$, the $g_d^r$ is complete and very ample and, if $g_{d'}^{r'}$ denotes the Serre-dual linear 
system $|K_X - g_d^r|$ of the $g_d^r$, we have $r' \geq r$ and
$$
\frac{d_{r'}}{r'} < \frac{d_{r'+1}}{r'+1}.
$$
\end{lem}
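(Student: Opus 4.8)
The plan is to establish, in order, that $d_r=d$, that the $g_d^r$ is complete and very ample, that $r'\ge r$, and finally the reversed slope inequality at $r'$, the last step reducing exactly to hypothesis (2). First, since $X$ carries a $g_d^r$ we have $d_r\le d$, while Lemma~\ref{lem2.1}(a) gives $d-1=d_{r-1}<d_r$, hence $d_r\ge d$ and so $d_r=d$; thus the given series computes $d_r$. By Lemma~\ref{lem2.1}(b) its line bundle $L$ then has $h^0(L)=r+1$ (so the series is complete) and is base point free. If $L$ were not very ample it would fail to separate two points or a tangent direction $p,q$, i.e. $h^0(L-p-q)=r$, producing a $g_{d-2}^{r-1}$ and contradicting $d_{r-1}=d-1$. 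Hence $L$ is very ample.

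Next I would pin down the degree with Castelnuovo's bound~\eqref{eqcast}. As $L$ is very ample it is simple with $d>r>1$, so $g\le\pi(d,r)$. For $2r-1\le d\le 3r-2$ one has $\pi(d,r)=2d-3r+1$, whereas (2) reads $g\ge 2d-3r+2=\pi(d,r)+1$; this contradiction forces $d\ge 3r-1$. Substituting $3r\le d+1$ back into (2) gives $g\ge 2d-3r+2\ge d+1$, that is $d\le g-1$. In particular $L$ is special, so its Serre dual $|K_X-L|$ is a genuine $g_{d'}^{r'}$ with $d'=2g-2-d$, and Riemann--Roch gives $r'=g-d+r-1\ge r$, the asserted inequality.

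For the reversed slope inequality I would bound the two gonalities at $r'$. The dual series gives $d_{r'}\le 2g-2-d$. To bound $d_{r'+1}$ from below, take a series computing it; if it is special, dualizing produces a $g_e^s$ with $e=2g-2-d_{r'+1}$ and $s=r'+g-d_{r'+1}$, and when $s\ge r-1$ one may subtract $s-(r-1)$ general points to obtain a $g^{r-1}$ of degree exactly $d-2$, against $d_{r-1}=d-1$; hence $s\le r-2$, i.e. $d_{r'+1}\ge 2g-d+1$ (the nonspecial case yields the even larger value $d_{r'+1}=2g-d+r$). With $d_{r'}\le 2g-2-d$, $d_{r'+1}\ge(2g-2-d)+3$ and $r'=g-d+r-1$, the inequality $\frac{d_{r'}}{r'}<\frac{d_{r'+1}}{r'+1}$ follows as soon as $2g-2-d<3r'$, and a one-line computation identifies this last condition with hypothesis (2), $2d\le g+3r-2$.

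The parts giving $d_r=d$, completeness, very ampleness and $r'\ge r$ are short consequences of Lemma~\ref{lem2.1} and Castelnuovo's inequality. The crux, and the step I expect to be most delicate, is the lower bound $d_{r'+1}\ge 2g-d+1$: it requires dualizing a minimal $g^{r'+1}$, controlling the base points while subtracting general points to descend to dimension $r-1$, and separating the special from the nonspecial case so that hypothesis (1) can be invoked. It is reassuring that once these estimates are in place the target inequality collapses to precisely hypothesis (2).
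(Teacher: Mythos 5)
Your argument is correct and follows the same skeleton as the paper: show $d_r=d$ with completeness and very ampleness from hypothesis (1), prove $r'\ge r$, bound $d_{r'}\le d'$ and $d_{r'+1}\ge d'+3$, and reduce the resulting inequality $d'<3r'$ to hypothesis (2). The one genuinely different step is your proof that $r'\ge r$: you invoke Castelnuovo's genus bound \eqref{eqcast} for the simple series $g_d^r$ to rule out $d\le 3r-2$ (where $\pi(d,r)=2d-3r+1<g$ by (2)), deduce $d\ge 3r-1$ and hence $d\le g-1$ from (2), and read off $r'=g-1-d+r\ge r$ from Riemann--Roch. The paper instead uses Accola's dimension count to get $\dim(2g_d^r)\ge 3r-1$, combines this with (2) to conclude that $2g_d^r$ is special, and then estimates $r'=\dim|(K_X-2g_d^r)+g_d^r|\ge\dim|K_X-2g_d^r|+r\ge r$. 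Your route is slightly more elementary (it needs only the closed formula for $\pi(d,r)$ rather than the superadditivity of dimensions of sums of linear series) and has the bonus of making the auxiliary facts $d\ge 3r-1$ and $d\le g-1$ explicit, which the paper only records afterwards in a remark; the paper's route has the advantage of exhibiting the speciality of $2g_d^r$, which is reused elsewhere. Your treatment of $d_{r'+1}\ge d'+3$ (dualize a minimal $g^{r'+1}$ and subtract general points) is just the mirror image of the paper's (add points to reach degree $d'+2$, then dualize), and both correctly land on a $g_{d-2}^{r-1}$ contradicting (1).
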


(Note that $d' = 2g-2-d$ and $r' = g-1-d+r$.) 

\begin{proof}
Clearly $d_r \leq d$ and since $d_{r-1} = d-1$, we have $d_r = d$. If the $g_d^r$ were incomplete or not very ample, the curve 
$X$ would admit a $g_{d-2}^{r-1}$. Then $d_{r-1} \leq d-2$, a contradiction.

Since the $g_d^r$ is simple and $d \geq 2r-1$, it follows from Castelnuovo's count (\cite[Lemma 3.2]{a2} 
that $\dim (2g_d^r) \geq 3r-1$, and assumption (2) implies that 
$$
\dim (2g_d^r) = 2d -g+1 + \dim |K_X - 2g_d^r| \leq 3r-1 + \dim |K_X - 2g_d^r|.
$$
Hence $2g_d^r$ is a special linear series, and we obtain
\begin{eqnarray*}
r' = \dim |K_X - g_d^r| &=& \dim |(K_X - 2g_d^r) + g_d^r|\\
& \geq &\dim |K_X - 2g_d^r| + \dim(g_d^r) \geq r.
\end{eqnarray*}
Since $|K_X - g_d^r|$ is a $g_{d'}^{r'}$, we know that $d_{r'} \leq d'$.

Assume that $d_{r'+1} < d'+3$. Then $X$ admits a complete $g_{d'+2}^{r'+1}$ which by dualization gives a 
$g_{d-2}^{r-1}$ contradicting $d_{r-1} = d-1$. Hence we have $d_{r'+1} \geq d' + 3$.

Since, by (2),
$$
d' = d -2r + 2r' < g-1-d+r +2r' = 3r',
$$
we clearly have
$$
\frac{d'}{r'} < \frac{d'+3}{r'+1}.
$$
So the inequalities $d_{r'} \leq d'$ and $d'+3 \leq d_{r'+1}$ of above complete the proof of the proposition.
\end{proof}

\begin{rem} {\em (i) In Lemma \ref{lem3.6} we have $r' > r$ unless $|2g_d^r| = |K_X|$.
In fact, otherwise we have by \cite[Lemma 5.1]{a2}, 
$$
g-1 = \dim |g_d^r + (K_X -g_d^r)| \geq r + 2r' = 2g-2 -2d + 3r
$$
contradicting (2).

In particular, $d \leq g-1$ and hence $d' \geq g-1$. Since we know from the proof of Lemma \ref{lem3.6} that
$d_{r'+1} \geq d'+3$, we get $d_{r'+1} \geq (g-1)+3 = g+2$. 
Hence Lemma \ref{lem3.6} cannot discover a violation of the slope inequality for numbers $< g$ of the gonality sequence.

(ii) Lemma \ref{lem3.6} implies that $d \geq 3r-1$. In fact, we noted in Section 2 that $g \leq \pi(d,r) < d$ for $d < 3r-1$.
$\hspace{6.1cm} \square$
}
\end{rem}

\begin{prop} \label{prop3.8}
Let $g_d^r$ be a very ample linear series on $X$ with $d \geq 3r-1$ and assume that 
$$
g > \pi(d,r) - \chi(d,r) + 2.
$$ 
Then  we have
\begin{enumerate}
\item $d_{r-1} = \min\{(r-1)d_1,d-1\}$, and
\item $2d \leq g +3r -2$.
\end{enumerate}
\end{prop}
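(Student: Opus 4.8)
The plan is to treat the two assertions separately: assertion (2) is purely numerical, while assertion (1) is the substantial one. Throughout I use that a very ample series is base point free and simple, so that Proposition \ref{prop2.4} and Theorem \ref{thm2.3} apply to the given $g_d^r$.

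For (2), I would rewrite the target $2d\le g+3r-2$ as $g>2d-3r+1$. Since by hypothesis $g>\pi(d,r)-\chi(d,r)+2$, it suffices to prove the elementary estimate
$$\pi(d,r)-\chi(d,r)+2\ \ge\ 2d-3r+1.$$
Writing $\chi=\chi(d,r)$ and $d-1=\chi(r-1)+\epsilon$ with $1\le\epsilon\le r-1$, so that $\pi(d,r)=\chi\big(\tfrac{\chi-1}{2}(r-1)+\epsilon\big)$ and $2d=2\chi(r-1)+2\epsilon+2$, the difference of the two sides simplifies to
$$(r-1)\,\frac{\chi(\chi-5)}{2}+\epsilon(\chi-2)-\chi+3r-1.$$
Since $d\ge 3r-1$ is equivalent to $\chi\ge 3$, I would verify nonnegativity by cases: $\chi=3$ gives $\epsilon-1\ge 0$, $\chi=4$ gives $r-3+2\epsilon>0$, and for $\chi\ge 5$ the first term is $\ge 0$ while the remaining terms are at least $3r-3>0$. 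Then $g>\pi(d,r)-\chi(d,r)+2\ge 2d-3r+1$ forces $g\ge 2d-3r+2$, i.e. $2d\le g+3r-2$. Note that the strictness of the genus hypothesis is exactly what rescues the borderline case $\chi=3,\ \epsilon=1$.

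For (1), the inequality $d_{r-1}\le\min\{(r-1)d_1,\,d-1\}$ is the easy half: the bound $(r-1)d_1$ is Lemma \ref{lem2.1}(c), and $d-1$ comes from projecting the very ample $g_d^r$ from a point of $X$, which drops the dimension by exactly one and yields a $g_{d-1}^{r-1}$. The content is the reverse inequality. Let $M$ compute $d_{r-1}$; by Lemma \ref{lem2.1}(b) it is generated with $h^0(M)=r$, defining a nondegenerate morphism $\phi_M\colon X\to\PP^{r-1}$. I would split according to whether $\phi_M$ is birational onto its image. If it factors, through its Stein factorization, as $X\xrightarrow{\psi}X'\to\PP^{r-1}$ with $\deg\psi=e\ge 2$ and $X'\cong\PP^1$, then $M=\psi^*M'$ with $\deg M'\ge r-1$ (nondegeneracy forces a rational curve of degree $\ge r-1$) and $e\ge d_1$ (a degree-$e$ map to $\PP^1$ has $e$ at least the gonality), so $d_{r-1}=e\deg M'\ge(r-1)d_1$, as wanted.

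The two remaining situations carry the difficulty. First, one must exclude a factorization with $g(X')\ge 1$, where the genus-$0$ argument fails; here I would use that the hypothesis forces $g$ close to the Castelnuovo maximum $\pi(d,r)$, which is incompatible with an independent low-degree map to a positive-genus curve (a Castelnuovo--Severi type bound, or bounding $g$ by Riemann--Hurwitz from $g(X')\le\pi(\deg M',r-1)$). Second, and this is the main obstacle, one must rule out a \emph{simple} $g_N^{r-1}$ with $N\le\min\{(r-1)d_1,d-1\}-1$. The naive Castelnuovo bound $g\le\pi(N,r-1)$ disposes of $N$ well below the threshold but is too weak at the top end (already for extremal curves of degree $>3r-1$ it fails to exclude $N=d-2$ when the minimum is $d-1$). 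To close this gap I would feed auxiliary series into Theorem \ref{thm2.3}: the hypothesis $g>\pi(d,r)-\chi(d,r)+2$ makes every $g_m^s$ with $m\le d,\ s\ge r$ coincide with the given $g_d^r$ (case $k=1$), and every $g_m^s$ with $m\le 2d,\ s\ge 3r-1$ coincide with $2g_d^r$ (case $k=2$). Plugging in sums such as $|M+A|$ with $A$ the gonality bundle, or $|2M|$, the forced equality contradicts very ampleness — for instance $L-A$ cannot be a $g^{r-1}$, since a very ample series loses two sections upon subtracting an effective divisor of degree $\ge 2$. Since $k=1,2$ alone may not reach the sharp threshold in every degree range, my fallback would be to work on the surface of degree $r-1$ furnished by Proposition \ref{prop2.4} and classify the $(r-1)$-dimensional series of least degree on a curve in a surface of minimal degree; these come only from the ruling (giving $(r-1)d_1$) or from hyperplane sections minus a point (giving $d-1$), which is precisely the asserted minimum.
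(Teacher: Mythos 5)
Your part (2) is correct and is essentially the paper's own computation in a different dress: the paper factors the difference as $\tfrac{\chi-2}{2}\bigl((\chi-3)(r-1)+2\epsilon\bigr)\ge\chi-2$ rather than splitting into cases on $\chi$, but both arguments verify $\pi(d,r)-\chi(d,r)+2\ge 2d-3r+1$ and then use the strictness of the genus hypothesis. The easy half of (1), and the case of a series compounded of a rational involution, are also fine.

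The genuine gap is exactly in the two cases you yourself call the main obstacle, and neither of your proposed routes closes them. The paper's mechanism is this: by \cite[Lemma 5.1]{a2} one has $\dim(g_d^r+g_{d_{r-1}}^{r-1})\ge r+2(r-1)=3r-2$; if the dimension were $\ge 3r-1$, Theorem \ref{thm2.3} with $k=2$ would force $g_d^r+g_{d_{r-1}}^{r-1}=2g_d^r$, impossible since $d_{r-1}<d$; hence the dimension equals the \emph{minimal} value $3r-2$. That pinning-down is the step you are missing, because it is what unlocks Accola's Lemmas 5.2 and 5.3: when the sum has minimal dimension, Lemma 5.2 says a \emph{simple} $g_{d_{r-1}}^{r-1}$ satisfies $g_d^r-g_{d_{r-1}}^{r-1}\ne\emptyset$, which together with $d_{r-1}\le d-2$ contradicts very ampleness outright --- no classification of low-degree series on the minimal-degree surface is needed; and Lemma 5.3 says a \emph{compounded} $g_{d_{r-1}}^{r-1}$ must be $(r-1)g_t^1$, whence $t=d_1$ by Lemma \ref{lem2.1}(c). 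Your substitutes do not work as stated: Theorem \ref{thm2.3} with $k=1$ only constrains series of dimension $\ge r$ and with $k=2$ only dimension $\ge 3r-1$, while $|M+g_d^r|$ sits at dimension $3r-2$, so it never touches a $g^{r-1}$ of degree $d-2$ (you concede this and retreat to an unexecuted ``fallback''); and your treatment of a compounded series over a curve of genus $\ge 1$ is only a gesture --- Riemann--Hurwitz bounds $g$ from below, not above, and Castelnuovo--Severi requires a second independent map that you have not produced. So the statement is true and your outline of the dichotomy is reasonable, but as written the proposal does not prove part (1); the decisive tools are Accola's Lemmas 5.2 and 5.3 applied after the dimension of $g_d^r+g_{d_{r-1}}^{r-1}$ has been forced down to exactly $3r-2$ by Theorem \ref{thm2.3}.
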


\begin{proof} 
For $r = 2$ the assertions are obvious. So suppose $r \geq 3$.

(1):
Since $d_r \leq d$, clearly $d_{r-1} \leq d-1$.
By \cite[Lemma 5.1]{a2} we have
$$
\dim ( g_d^r + g_{d_{r-1}}^{r-1} ) \geq r + 2(r-1) = 3r-2.
$$
Assume that we even have $\dim ( g_d^r + g_{d_{r-1}}^{r-1} ) \geq 3r-1$. Since by assumption $\chi(d,r) \geq 3$ and
$g > \pi(d,r) - \chi(d,r) + 2$, it follows from Theorem \ref{thm2.3} that $g_d^r + g_{d_{r-1}}^{r-1} = 2g_d^r$,
which is absurd. Hence we have 
$$
\dim(g_d^r + g_{d_{r-1}}^{r-1}) = 3r-2.
$$ 
Assume that $d_{r-1} \leq d-2$. We have to show that $d_{r-1} = (r-1)d_1$.

If $g_{d_{r-1}}^{r-1}$ is simple, then $ g_d^r - g_{d_{r-1}}^{r-1}  \neq \emptyset$ by
\cite[Lemma 5.2]{a2}. But since $d_{r-1} \leq d-2$, this implies that $g_d^r$ is not very ample, a contradiction.
Hence $g_{d_{r-1}}^{r-1}$ is compounded. By \cite[Lemma 5.3]{a2} we conclude that 
$$
g_{d_{r-1}}^{r-1} = (r-1)g_t^1
$$
for a pencil $g_t^1$ on $X$. Clearly $t \geq d_1$ and, since $(r-1)d_1 \geq d_{r-1} = (r-1)t$ by Lemma \ref{lem2.1} (c), 
we must have $t = d_1$.

(2): In order to abbreviate, we write $m := \chi(d,r)$. Then $d-1 = m(r-1) + p$ with $1 \leq p \leq r-1$. 
Since $d \geq 3r-1$ we have $m \geq 3$. Consequently, $(m-3)(r-1) + 2p \geq 2$ which implies that 
$$
\frac{m-2}{2} \left((m-3)(r-1) + 2p \right) \geq m-2.
$$ 
But one easily computes that
\begin{eqnarray*}
\pi(d,r) -(2d+1-3r) & =& (m-2) \left(d-1 - \frac{m+3}{2}(r-1)\right)\\
&=& \frac{m-2}{2} \left( (m-3)(r-1) + 2p \right)  \geq m-2.
\end{eqnarray*}
This implies that $g > \pi(d,r) -m + 2 \geq 2d + 1 -3r$,
which is (2). 
\end{proof}

\begin{ex} \label{ex3.9}
$(d=10, r=3)$. {\em A smooth curve $X$ of type $(5,5)$ on a smooth quadric $Q$ in $\PP^3$ has genus $g = 16 = \pi(10,3)$ and
gonality $d_1 = 5$ (see \cite{m}). A smooth curve of type $(4,6)$ on $Q$ has genus $g = 15$ and gonality $d_1 = 4$ (see \cite{m}).
By Proposition \ref{prop3.8}, $d_2 = 9 = \deg X -1$ in the first case and $d_2 = 2d_1 = 8$ in the second case. (Note, 
however, that in the latter case $X$ is an extremal curve of degree 14 in $\PP^5$.)} $  \hspace{0.5cm} \square$
\end{ex}

Example \ref{ex3.9} can be generalized as follows:

\begin{ex} \label{ex4.12}
{\em Let $X$ be a smooth curve of type $(a,b)$ with $a \leq b$ on a smooth quadric $Q$ in $\PP^3$. Then $X$ has 
degree $d = a+b$, genus $g=(a-1)(b-1)$
and gonality $d_1 = a$ and, according to \cite[Example 4.9]{cc}, we have $d_2 = 2a$ if $a < b$ and $d_2 = d-1$ if $a=b$.
In the latter case we have $g = (a-1)^2$ and so condition (2) in Lemma \ref{lem3.6} just reads $(a-3)^2 > 0$. 
Hence for $a \geq 4$ Lemma \ref{lem3.6} implies that a smooth curve of type $(a,a)$ on $Q$ does not satisfy all slope 
inequalities. Note that such a curve is a smooth complete intersection of $Q$ with a surface of degree $a \geq 4$ and it is an
extremal space curve of degree $2a$.} $  \hspace{1.5cm} \square$
\end{ex}

By Proposition \ref{prop2.4} the curve $X$ in Proposition \ref{prop3.8} is a smooth curve of degree $d$ on a surface 
of degree $r-1$ in $\PP^r$. In this case it is not difficult to compute $d_1$ (see \cite{m}).

\begin{theorem} \label{thm3.10}
Let $X$ be an extremal curve of degree $d \geq 3r-1$ in $\PP^r$. Then $d_{r-1} = d-1$ and $X$ does not satisfy all 
slope inequalities.
\end{theorem}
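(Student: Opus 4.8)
The plan is to reduce Theorem \ref{thm3.10} to the already-proved Lemma \ref{lem3.6}, whose conclusion is precisely the failure of a slope inequality. For an extremal curve $X \subset \PP^r$ of degree $d \geq 3r-1$, the embedding $g_d^r$ is complete and very ample by definition, so the hypotheses of Lemma \ref{lem3.6} that need checking are exactly conditions (1) and (2): namely $d_{r-1} = d-1$ and $2d \leq g + 3r - 2$. The natural bridge is Proposition \ref{prop3.8}, which delivers both (1) (in the form $d_{r-1} = \min\{(r-1)d_1, d-1\}$) and (2) under the single hypothesis $g > \pi(d,r) - \chi(d,r) + 2$. So the strategy is: first verify that an extremal curve satisfies this numerical inequality, then invoke Proposition \ref{prop3.8} to get (2) and a preliminary form of (1), then upgrade (1) to the clean statement $d_{r-1} = d-1$, and finally apply Lemma \ref{lem3.6}.

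The numerical hypothesis is the easy part but must be handled first. For an extremal curve we have equality $g = \pi(d,r)$ in Castelnuovo's bound \eqref{eqcast}, so I would check that $\pi(d,r) > \pi(d,r) - \chi(d,r) + 2$, i.e. that $\chi(d,r) > 2$. This holds because $d \geq 3r-1$ is equivalent to $\chi(d,r) \geq 3$, a point already made explicitly in Section 2. Thus the hypothesis of Proposition \ref{prop3.8} is met, and I may also record that the $g_d^r$ is very ample (given) so Proposition \ref{prop3.8} applies verbatim.

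The one genuine step is to promote Proposition \ref{prop3.8}(1), which reads $d_{r-1} = \min\{(r-1)d_1, d-1\}$, to the assertion $d_{r-1} = d-1$ required by Lemma \ref{lem3.6}. For this I must show $(r-1)d_1 \geq d-1$, i.e. that the curve cannot be covered too cheaply by an $(r-1)$-fold pencil. The cleanest route is to bound $d_1$ from below: since $X$ sits on a surface of degree $r-1$ in $\PP^r$ (Proposition \ref{prop2.4}), its gonality is computed via that scroll/Del Pezzo structure, and for an extremal curve of degree $d = \chi(r-1) + 1 + \epsilon$ with $\chi \geq 3$ the minimal pencil has degree $d_1 = \chi$ (cut out by the ruling of the scroll). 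Then $(r-1)d_1 = (r-1)\chi$ and I must compare this with $d - 1 = \chi(r-1) + \epsilon$; since $\epsilon \leq r-1$ and, more to the point, $(r-1)\chi \geq \chi(r-1) + \epsilon$ fails in general, I would instead argue directly that if $d_{r-1} = (r-1)d_1 < d-1$ then $g_{d_{r-1}}^{r-1}$ is compounded of a pencil, and dualizing together with the very ampleness of $g_d^r$ forces a contradiction exactly as in the proof of Proposition \ref{prop3.8}(1). In short, the very ampleness of the extremal embedding rules out the compounded alternative, leaving $d_{r-1} = d-1$.

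With (1) and (2) in hand, Lemma \ref{lem3.6} applies and yields $\frac{d_{r'}}{r'} < \frac{d_{r'+1}}{r'+1}$ for the Serre-dual dimension $r' = g-1-d+r \geq r$, which is the desired violation of a slope inequality. The main obstacle I anticipate is precisely the elimination of the compounded case in the promotion of (1): one must be sure that an extremal, hence very ample, $g_d^r$ cannot coexist with $d_{r-1}$ being achieved by a multiple of a low-degree pencil, and the argument should mirror the one already used inside Proposition \ref{prop3.8} rather than require a separate gonality computation. Everything else is bookkeeping with the Castelnuovo invariants $\pi(d,r)$ and $\chi(d,r)$.
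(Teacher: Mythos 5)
Your overall architecture matches the paper exactly: verify $g = \pi(d,r) > \pi(d,r) - \chi(d,r) + 2$ (since $\chi \geq 3$), invoke Proposition \ref{prop3.8} for condition (2) and for $d_{r-1} = \min\{(r-1)d_1, d-1\}$, then reduce everything to showing $(r-1)d_1 \geq d-1$ and apply Lemma \ref{lem3.6}. But that last reduction is exactly where your argument breaks down, in two ways.

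First, the gonality of an extremal curve on the scroll is $d_1 = \chi(d,r)+1$ (or $\chi+2$ when $r-1 \mid d-1$), not $\chi$; check Example \ref{ex3.5}, where $d=8$, $r=3$ gives $\chi=3$ but $d_1=4$. With the correct value the needed inequality is immediate: $(r-1)(\chi+1) = \chi(r-1) + (r-1) \geq \chi(r-1) + \epsilon = d-1$. With your value $\chi$ it fails, as you yourself observe. Second, your fallback --- that if $d_{r-1}=(r-1)d_1 < d-1$ then the compounded alternative is ``ruled out by very ampleness exactly as in Proposition \ref{prop3.8}'' --- has the logic of that proof backwards. In Proposition \ref{prop3.8}(1), very ampleness of the $g_d^r$ eliminates the \emph{simple} case (via Accola's Lemma 5.2), thereby \emph{forcing} the compounded case $g_{d_{r-1}}^{r-1} = (r-1)g_{d_1}^1$; the compounded case is perfectly compatible with very ampleness and is precisely what produces the $(r-1)d_1$ branch of the minimum. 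No contradiction can be extracted from it without an actual lower bound on $d_1$, which is the one piece of input your proof never legitimately supplies. The paper gets it by combining \cite[Section III, Theorem 2.5]{acgh} with \cite{m}, and also handles the exceptional case $r=5$ where $X$ may be a smooth plane curve of degree $d/2$ with $d_1 = d/2 - 1$ (there $(r-1)d_1 = 2d-4 \geq d-1$), a case your argument does not address.
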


\begin{proof}
By Lemma \ref{lem3.6} and Proposition \ref{prop3.8} it is enough to show that $(r-1)d_1 \geq d-1$ for an extremal 
curve of degree $d$ in $\PP^r$. 

This is true for $r=2$. For $r \geq 3$ it is known (combine \cite[Section III, Theorem 2.5]{acgh} and \cite{m}) that 
$$
d_1 = \chi(d,r) + 1, \quad  \mbox{if} \; r-1\; \mbox{does not divide}\; d-1,
$$
and
$$
\chi(d,r) + 1 \leq d_1 \leq \chi(d,r) + 2, \quad \mbox{if} \; r-1\; \mbox{divides}\; d-1,
$$ 
unless $r=5$, in which case $X$ can also be isomorphic to a smooth 
plane curve of degree $\frac{d}{2}$, which has gonality $d_1 = \frac{d}{2} -1$. This implies the assertion.
\end{proof}

Recall that by Proposition \ref{genera} the genera of the curves of Theorem \ref{thm3.10} cover a big subset of ${\mathbb N}$.\\

The following example shows that a violation of the slope inequalities for members of the gonality sequence less than $g$ 
is not restricted to smooth plane curves.

\begin{ex} \label{ex3.11}    
A smooth curve $X$ of type $(5,5)$ on a smooth quadric $Q$ in $\PP^3$ is an extremal space curve of degree $10$
whose gonality sequence is for $r \leq g - d_1 -1 = 10$:
\begin{center}
\begin{tabular}{|c||c|c|c|c|c|c|c|c|c|c|}\hline
$r$&$1$&$2$&$3$&$4$&$5$&$6$&$7$&$8$&$9$&$10$\\
\hline
$d_r$&$5$&$9$&$10$&$14$&$15$&$18$&$19$&$20$&$23$&$24$\\
\hline
\end{tabular}
\end{center}
In particular $\frac{d_3}{3} < \frac{d_4}{4}$.
\end{ex} 

\begin{proof}
The first 3 members of the gonality sequence are clear by example \ref{ex3.9}. According to \cite[Lemma 5.1]{a2} 
the linear series $g_{10}^3 +g_5^1$ is a $g_{15}^n$ with $n \geq 5$, and $n > 5$ is impossible by \cite[Corollary 2.4.3]{cm}. 
By Serre duality it suffices to show that $d_4 = 14$. 

Assume that $X$ admits a $g_{13}^4$. Since $d_1 = 5$, we have $\dim (g_{13}^4 - g_{10}^3) \leq 0$.
Then \cite[III Exercise B-6]{acgh} implies that
$$
\dim (g_{13}^4 + g_{10}^3) \geq 2 \cdot 4 + 3 - 1 = 10.
$$
However a linear series $g_{23}^{10}$ is Serre-dual to a $g_7^2$ which contradicts $d_2 = 9$.
\end{proof}                                                  

By Section 2, the next example is not an extremal curve of degree $d \geq 3r-1$ in $\PP^r$.

\begin{ex} \label{ex3.12}
A smooth complete intersection $X$ of a cubic and a quartic surface in $\PP^3$ has degree $12$, genus $19$,
Clifford index $\gamma = 6 =d_1 -2$ and does not satisfy all slope inequalities. 
\end{ex}

\begin{proof}
By \cite{b} we have $d_1 =12 - \ell = \gamma +2$ if $X$ has an $\ell$-secant line, but no $(\ell +1)$-secant line.

According to \cite[Lemma 2]{m1}, $X$ has a $4$-secant line. It has no $5$-secant line, since such a line would lie on both 
the cubic and the quartic surface intersecting in $X$ and would thus be contained in $X$. Hence $d_1 = 8, \gamma = 6$ and $X$ has genus 19 
(\cite[III, Ecercise C-1]{acgh}).

Concerning the statement on the slope inequalities, according to Lemma \ref{lem3.6} it is enough to show that $d_2 = 11$.
So assume that $X$ has a $g_{10}^2$ (computing $\gamma$). Then, by \cite[Lemma 5.1]{a2} we have 
$\dim (g_{12}^3 + g_{10}^2) \geq 7$. If equality holds, we get the contradiction $d_2 = 2d_1 = 16$ in the same way as in the 
last part of the proof of Proposition \ref{prop3.8} (1). So $g_{12}^3 + g_{10}^2$ gives rise to a $g_{22}^8$
on $X$, whose dual is a $g_{14}^4$ computing $\gamma$. But according to \cite[Corollary 3.2.5]{cm} there is no $g_d^r$ on $X$ 
computing $\gamma$ such that $12 < d < g = 19$. Hence $d_2 = 10$ is impossible. 
\end{proof}

In the Examples \ref{ex3.3}, \ref{ex3.5} and \ref{ex3.12} one can compute the Clifford index $\gamma$ by a linear 
series which is not a pencil. The following proposition is a consequence of 
Lemma \ref{lem3.6} for a curve with such a property. 

\begin{prop}  \label{prop4.14}
Assume that $g \geq 2 \gamma + r + 2$ and $d_r = \gamma + 2r$ for some $r \geq 2$. If $X$ satisfies all 
slope inequalities, then for all $s = 1, \ldots, r$,
$$
d_s = \gamma + 2s.
$$  
\end{prop}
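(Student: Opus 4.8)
The plan is to prove the equalities by downward induction on $s$, starting from the given case $s=r$ and descending to $s=1$. At each stage I combine two ingredients: the Clifford-type lower bound of Lemma \ref{lem2.2}(d), which yields $d_s \ge \gamma + 2s$, and Lemma \ref{lem3.6}, which I use contrapositively to force the matching upper bound $d_s \le \gamma + 2s$. Since the proposition is advertised as a consequence of Lemma \ref{lem3.6}, the whole point is to check that its hypotheses are available at every step.

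First I would dispose of the lower bound once and for all. By Lemma \ref{lem2.2}(d), $d_s \ge \min\{\gamma + 2s,\, g+s-1\}$ for every $s$. Applying this at $s=r$ and using the hypothesis $d_r = \gamma + 2r$, the option $g+r-1$ is excluded, so $\gamma + 2r \le g+r-1$, that is, $r \le g-1-\gamma$. Consequently every index under consideration satisfies $s \le r \le g-1-\gamma$, the minimum in Lemma \ref{lem2.2}(d) is always the term $\gamma + 2s$, and therefore $d_s \ge \gamma + 2s$ for all $s=1,\dots,r$. Note this invokes Lemma \ref{lem2.2}(d) rather than the bare definition of $\gamma$, so no specialty condition $d_s \le g-1$ is needed.

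For the inductive step, suppose $d_s = \gamma + 2s$ for some $s$ with $2 \le s \le r$; I want $d_{s-1} = \gamma + 2(s-1)$. Since the gonality sequence is strictly increasing (Lemma \ref{lem2.1}(a)) and $d_{s-1} \ge \gamma + 2(s-1)$ by the previous paragraph, the only value other than the desired one is $d_{s-1} = d_s - 1$. I would rule this out by applying Lemma \ref{lem3.6} to a $g_{d_s}^s$ (which exists by definition of $d_s$), taking its parameter to be $d = \gamma + 2s$: hypothesis (1) there is exactly $d_{s-1}=d-1$, the numerical condition $d \ge 2s-1 \ge 3$ holds since $s \ge 2$ and $\gamma \ge 0$, and---this is the crux---hypothesis (2), namely $2d \le g + 3s - 2$, reduces to $g \ge 2\gamma + s + 2$, which follows from the standing assumption $g \ge 2\gamma + r + 2$ because $s \le r$. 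Lemma \ref{lem3.6} would then produce an index $r'$ with $d_{r'}/r' < d_{r'+1}/(r'+1)$, contradicting that $X$ satisfies all slope inequalities. Hence $d_{s-1} = \gamma + 2(s-1)$, and descending from $s=r$ down to $s=2$ gives the claim for all $s=1,\dots,r$.

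The step I expect to be the main obstacle is the bookkeeping that makes the single inequality $g \ge 2\gamma + r + 2$ coincide with condition (2) of Lemma \ref{lem3.6} at every stage; the key observation is that (2) is only ever needed with $s$ in place of $r$, so the hypothesis at the top value $s=r$ already covers all smaller $s$ uniformly. I would also verify carefully that the strict monotonicity leaves precisely the two candidate values $\gamma + 2(s-1)$ and $d_s-1$ for $d_{s-1}$, so that excluding the latter pins down the former.
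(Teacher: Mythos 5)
Your proof is correct and follows essentially the same route as the paper's: the lower bound $d_s \ge \gamma+2s$ from Lemma \ref{lem2.2}(d) combined with the contrapositive of Lemma \ref{lem3.6} to exclude $d_{s-1}=d_s-1$ at each downward step. One small repair: the inequality $\gamma+2r \le g+r-1$ does not actually follow from the claim that ``$d_r=\gamma+2r$ excludes the option $g+r-1$ in the minimum'' (from $d_r \ge \min\{\gamma+2r,\,g+r-1\}$ and $d_r=\gamma+2r$ one cannot rule out $g+r-1<\gamma+2r$), but it follows immediately from the standing hypothesis $g \ge 2\gamma+r+2$, which is exactly how the paper obtains $g+s-1>\gamma+2s$ for all $s \le r$.
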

 
\begin{proof}
By our assumption on $g$ we have $g + s - 1 > \gamma +2s$ for all $s = 1, \ldots, r$. So $\min \{ \gamma +2s, g+s-1 \} = \gamma +2s$.
According to Lemma \ref{lem2.2} (d) we thus have $d_s \geq \gamma + 2s = d_r - 2(r-s)$ for $s = 1, \ldots, r$.

Hence, if $X$ satisfies all slope inequalities, then Lemma \ref{lem3.6} implies that 
$d_{r-1} = d_r -2$. Repeating this argument gives the assertion.
\end{proof} 

The curves fulfilling the hypotheses of Proposition \ref{prop4.14} are good candidates for curves not satisfying 
all slope inequalities. We will demonstrate this in the next section.

\section{Curves on a general K3-surface}

We extended Corollary \ref{cor3.4} to Theorem \ref{thm3.10}. Likewise, the curves of Example \ref{ex3.12} generalize 
to a bigger class of curves not satifying all slope inequalities. We consider the following situation.

Let $S$ be a general K3-surface of degree $2r-2$ in $\PP^r$. Then $\Pic S$ is generated by the class   
of a hyperplane section $H$ and $H^2 = \deg S = 2r-2$.
Let $X$ be a smooth irreducible curves in $S$. Then $X$ is contained in a linear series $|nH|$ for some  
positive integer $n$. The curve $X$ is $\frac{1}{n}$-canonical (i.e. $K_X = \cO_X(n)$), of genus 
$g = \frac{1}{2}X^2 + 1 = n^2(r-1) + 1$ and degree $d = X\cdot H = 2n(r-1)$ in $\PP^r$.

If $n \geq 2$, then by \cite[Example 3.2.6]{cm} the linear system $|H|_X|$ computes the Clifford index 
$\gamma = d-2r = 2(n-1)(r-1) -2$ of $X$. For $n = r = 3$ we obtain a special case of Example \ref{ex3.12}.

\begin{lem} \label{lem5.1}
In the situation just described, assume that $n \geq 4$. Let $D$ be a base point free and effective divisor on $X$ of 
degree $\delta < d$.
Then $D$ is contained in a hyperplane section of $X$ (more precisely, $h^0(H|_X - D) = 1$ if $D \neq 0$).
\end{lem}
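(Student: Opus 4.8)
The plan is to read the statement $h^0(\cO_X(H)-D)=1$ geometrically. Since $\Pic S=\ZZ H$ and $n\geq 2$, the restriction sequence $0\to \cO_S((1-n)H)\to \cO_S(H)\to \cO_X(H)\to 0$, together with $H^0(\cO_S((1-n)H))=0$ and $H^1(\cO_S((1-n)H))\cong H^1(\cO_S((n-1)H))^{*}=0$ (ampleness of $(n-1)H$), shows that $|H|_X|$ is the complete, very ample $g^r_d$ embedding $X\subset\PP^r$, with $h^0(\cO_X(H))=r+1$. Hence $h^0(\cO_X(H)-D)=r+1-c$, where $c=\dim\langle D\rangle+1$ is the number of conditions $D$ imposes on hyperplanes. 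So proving $h^0(\cO_X(H)-D)=1$ amounts to showing that $D$ spans \emph{exactly} a hyperplane $\PP^{r-1}$: the inequality $h^0\leq 1$ says $D$ spans at least a hyperplane, and $h^0\geq 1$ says $D$ does not span all of $\PP^r$. Throughout I record that a base point free $D\neq 0$ has $h^0(\cO_X(D))\geq 2$, so $|D|$ contributes to $\gamma$ (as $\delta<d\leq g-1$); since $\gamma=2(n-1)(r-1)-2$, the Clifford bound gives $\delta\geq\gamma+2=2(n-1)(r-1)=d-2(r-1)$, i.e. the residual degree $e:=d-\delta$ satisfies $0<e\leq 2(r-1)=H^2$.

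First I would settle the upper bound $h^0(\cO_X(H)-D)\leq 1$ by Clifford's theorem, which is the clean part. Suppose instead $h^0(\cO_X(H)-D)\geq 2$. Then $|\cO_X(H)-D|$ is a linear series of dimension $\geq 1$ and degree $d-\delta<d\leq g-1$, so it contributes to $\gamma$ and
$$
(d-\delta)-2\big(h^0(\cO_X(H)-D)-1\big)=\mathrm{Cliff}(\cO_X(H)-D)\ \geq\ \gamma=d-2r .
$$
Using $h^0(\cO_X(H)-D)\geq 2$ this forces $\delta\leq 2r-2=2(r-1)$, contradicting $\delta\geq 2(n-1)(r-1)\geq 6(r-1)>2(r-1)$ for $n\geq 4$, $r\geq 3$. (This half already works for $n\geq 3$.) Hence $h^0(\cO_X(H)-D)\leq 1$, i.e. $D$ spans at least a hyperplane.

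The hard part is the existence statement $h^0(\cO_X(H)-D)\geq 1$, i.e. that $D$ fails to span $\PP^r$, equivalently that the class $\cO_X(H)-D$ of degree $e\leq 2(r-1)$ is effective. This is genuine geometry and not formal: on a curve of the large genus $g=n^2(r-1)+1$ a class of degree $e\ll g$ is in general not effective, and because $X$ carries the low Clifford index $\gamma$ it is far from Brill--Noether general, so a dimension count in $\Pic^e$ is unavailable. The only genuine input is the surface, so the plan is to exploit $\Pic S=\ZZ H$. Concretely I would reduce to a base point free pencil $|P|\subseteq|D|$ (a general pencil inside the base point free $|D|$ is again base point free), with $\deg P\leq\delta<d$, and then invoke the classification of low-degree pencils on a section of a Picard-rank-one K3 surface (Green--Lazarsfeld, Donagi--Morrison, Knutsen): since $S$ carries no isotropic class, such a pencil is cut out by the hyperplane bundle, whence $\cO_X(H)-P$, and a fortiori $\cO_X(H)-D$, is effective. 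This is precisely where the hypothesis $n\geq 4$ should enter, guaranteeing that $\delta=d-e$ with $e\leq 2(r-1)$ lands in the range covered by that classification.

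Combining the two bounds yields $h^0(\cO_X(H)-D)=1$ for $D\neq 0$, so $D$ lies in a unique hyperplane section of $X$; for $D=0$ the claim is trivial. I expect the existence half to be the main obstacle, as it is the only step that must genuinely use that the K3 surface is general of Picard number one, rather than merely combining Riemann--Roch with Clifford's theorem.
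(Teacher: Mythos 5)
Your decomposition into the two halves $h^0(H|_X-D)\leq 1$ and $h^0(H|_X-D)\geq 1$ is sensible, and the first half is fine: from $D\neq 0$ base point free you get $\delta\geq d_1\geq\gamma+2$, hence $\deg(H|_X-D)\leq 2(r-1)<\gamma+2\leq d_1$, so $H|_X-D$ cannot move; this is exactly how the paper concludes $h^0(H|_X-D)=1$ once effectivity is known. The problem is the second half, which you rightly identify as the crux but do not actually prove. Your plan is to pass to a general sub-pencil $|P|\subseteq|D|$ and ``invoke the classification of low-degree pencils'' on curves in a Picard-rank-one K3. Two concrete objections. First, a sub-pencil of $|D|$ has the same degree $\delta$, which ranges up to $d-1=2n(r-1)-1$, far above the gonality; the Green--Lazarsfeld/Donagi--Morrison circle of results does not simply hand you, for such a pencil, containment in $|H|_X|$. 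Even the strongest Donagi--Morrison-type statements would only produce some line bundle $M=\lambda H$ on $S$ with $|P|\subseteq|M|_X|$, and you give no argument forcing $\lambda=1$; if $\lambda\geq 2$ you get $h^0(\lambda H|_X-D)\geq 1$, which is useless. Second, your parenthetical ``a fortiori'' points the wrong way if you ever intend $\deg P<\delta$: effectivity of $H|_X-P$ with $P\leq D$ gives $h^0(H|_X-D)\leq h^0(H|_X-P)$, so it does not imply effectivity of $H|_X-D$. You also never verify where $n\geq 4$ enters; you only express the hope that it puts $\delta$ ``in the range covered by that classification.''

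The paper closes exactly this gap with Reider's method applied directly to $D$ (not to a sub-pencil): since $\delta<d=2n(r-1)\leq\frac14 n^2(r-1)=\frac14 X^2$ for $n\geq 4$ (this is precisely where $n\geq 4$ is used), there is a nontrivial effective divisor $E_1$ on $S$ with $E_1^2<(X-E_1)\cdot E_1\leq\delta$ and $E_1|_X-D\geq 0$. Writing $E_1\in|\lambda H|$, the inequality $E_1^2<(X-E_1)\cdot E_1$ gives $\lambda<n/2$ and $(X-E_1)\cdot E_1\leq\delta<d$ gives $\lambda^2-n\lambda+n\geq 1$, i.e. $\left(\lambda-\frac{n}{2}\right)^2\geq\left(\frac{n-2}{2}\right)^2$, which together force $\lambda\leq 1$; hence $E_1\in|H|$ and $h^0(H|_X-D)\geq 1$. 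You would need to either reproduce this (or an equivalent Lazarsfeld--Mukai bundle argument) or supply a precise citation with hypotheses checked; as written, the existence half of your proof is a genuine gap.
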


\begin{proof}
We apply Reider's method \cite{re}: according to \cite[Proposition 2.10, Remark 2.11, 1) and Corollary 1.40]{re}, 
for any base point free and effective divisor $D$ on $X$ of degree $\delta < \frac{1}{4}X^2$ there is an 
effective non-trivial divisor $E_1$ on $S$ such that 
$$
E_1^2 < (X -E_1)\cdot E_1 \leq \delta, \quad \mbox{and} \quad  E_1|_X - D \geq 0.
$$

We apply this to our divisor $D$ of the statement of the lemma. We have
$$
\delta < d = 2n(r-1) \leq \frac{1}{2} n^2(r-1) = \frac{1}{4}X^2
$$
for $n \geq 4$. So by Reider's method, there is a divisor $E_1$ on $S$ with the indicated properties.
Since $\Pic S = \ZZ H$, we have $E_1 \in |\lambda H|$ for some positive integer $\lambda$. Then 
$E_1^2 < (X - E_1)\cdot E_1$ implies that $\lambda < \frac{n}{2}$, and from 
$(X - E_1) \cdot E_1 \leq \delta < d$ we obtain
$$
\lambda^2 -n \lambda + n \geq 1.
$$
Since $\lambda$ in an integer, this is equivalent to
$$
\left(\lambda - \frac{n}{2} \right)^2 \geq \left(\frac{n-2}{2} \right)^2.
$$ 
But $\lambda - \frac{n}{2} < 0$ and $\frac{n-2}{2} > 0$, so this implies that $\lambda - \frac{n}{2} \leq - \frac{n-2}{2}$, i.e.
$\lambda \leq 1$. Consequently, $|E_1| = |H|$, and so $E_1|X -D \geq 0$ shows that $h^0(H_X -D) \geq 1$.
To prove equality here, observe that $\delta \geq d_1 \geq \gamma + 2$ if $D \neq 0$. Then
\begin{eqnarray*}
\deg (H|_X -D) = d - \delta & \leq & d - (\gamma+2) = (\gamma +2r) -(\gamma+2)\\
& = & 2(r-1) = \frac{\gamma+2}{n-1} < \gamma + 2 \leq d_1.
\end{eqnarray*}
This implies that $h^0(H|_X - D) =1$.
\end{proof}

\begin{cor} \label{cor5.2}
In the situation of above let $n \geq 4$. Then $|H|_X|$ is the only $g_d^r$ on $X$.
\end{cor}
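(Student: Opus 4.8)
The plan is to reduce the statement to a uniqueness assertion about a single line bundle and then pin that bundle down with Lemma \ref{lem5.1}. First I would observe that every $g_d^r$ on $X$ automatically computes the Clifford index. Indeed $d = 2n(r-1) \le g-1$ since $n \ge 2$, so any $g_d^r$ contributes to $\gamma$, and its Clifford index is $d - 2r = \gamma$, the value already realised by $|H|_X|$. Minimality of $\gamma$ then forces the underlying bundle $L$ to satisfy $h^0(L) = r+1$, so the series is complete. The same contribution argument shows $d_r = d$: a $g_{d_r}^r$ with $d_r < d$ would contribute to $\gamma$ with Clifford index $d_r - 2r < \gamma$, which is absurd. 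Hence every $g_d^r$ computes $d_r$, and Lemma \ref{lem2.1}(b) gives that $L$ is globally generated. So it suffices to prove that the only base point free line bundle $L$ with $\deg L = d$ and $h^0(L) = r+1$ is $H|_X$.

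For this I would run a residuation argument built on Lemma \ref{lem5.1}. Take a general $E \in |L|$; by Bertini $E$ is reduced, and (granting that $\phi_L$ is birational onto its image) its points are general points of $X$. Choose two distinct points $p \ne p'$ in the support of $E$. For general $p$ the linear system $|L(-p)|$ is base point free, of degree $d - 1 < d$, so Lemma \ref{lem5.1} provides a unique hyperplane section containing $E - p$, necessarily of the form $\Sigma_p = (E - p) + q_p$ with $\Sigma_p \in |H|_X|$ and $q_p$ a single point; similarly $\Sigma_{p'} = (E - p') + q_{p'}$. Since both lie in $|H|_X|$ we get $\Sigma_p \sim \Sigma_{p'}$, that is $q_p + p' \sim q_{p'} + p$, an equivalence of effective divisors of degree $2$.

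Now $\gamma = 2(n-1)(r-1) - 2 \ge 2$, so $X$ is not hyperelliptic and a degree-$2$ class has at most one effective representative. Thus $q_p + p' = q_{p'} + p$ as divisors, and since $p' \ne p$ this forces $q_p = p$. Consequently $\Sigma_p = (E - p) + p = E$, so $E \in |H|_X|$, and therefore $L \cong \cO_X(E) = H|_X$, as desired.

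The main obstacle is the parenthetical assumption that $\phi_L$ is birational onto its image, equivalently that $|L(-p)|$ is base point free for general $p$; everything else is formal. If $\phi_L$ were a $k:1$ cover ($k \ge 2$) of a nondegenerate curve $Y \subset \PP^r$, then $L$ would be compounded of a pencil, and I would rule this out by applying Lemma \ref{lem5.1} to the base point free moving part of $|L(-F_1 - \cdots - F_{r-1})|$ obtained by subtracting $r-1$ general fibres: this is a base point free divisor of degree $< d$, and the residual of its unique containing hyperplane section must again be rigid, contradicting the fact that the fibres of a genuine pencil move. Disposing of this compound case cleanly, so as to guarantee base point freeness of $|L(-p)|$, is where the real work lies.
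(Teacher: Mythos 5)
Your overall skeleton matches the paper's: every $g_d^r$ computes $\gamma$, one reduces to a base point free divisor of degree $d-1$ inside a general $E\in|L|$, and Lemma \ref{lem5.1} traps that divisor in a hyperplane section. Where you diverge is the endgame. The paper takes a single point $p$ with $E-p$ base point free, notes $E-p$ lies in some $\Sigma\in|H|_X|$, and gets an immediate contradiction from the General Position Theorem (\cite[Theorem 4.1]{a2}): the greatest common divisor of a general divisor of the simple series $|L|$ with any divisor of $|H|_X|$ has degree at most $r<d-1$. Your residuation with two points $p\neq p'$, giving $q_p+p'\sim q_{p'}+p$ and hence $q_p=p$ by non-hyperellipticity (which holds since $\gamma\geq 2$), is a correct and rather elegant substitute: it avoids the General Position Theorem entirely, at the mild cost of needing two interchangeable points of $E$ (which uniform position supplies), and it even yields the positive conclusion $E\in|H|_X|$ rather than a contradiction.

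The genuine gap is exactly the one you flag: the simplicity of $|L|$, equivalently base point freeness of $|L(-p)|$ for general $p$. The paper does not prove this; it quotes \cite{cm} for the fact that a linear series computing the Clifford index is base point free and simple, and your own first paragraph already shows every $g_d^r$ computes $\gamma$, so that citation closes the case at once. Your proposed workaround for the compound case does not work as sketched. If $|L|$ is compounded, subtracting $r-1$ general fibres leaves a base point free pencil $|M|$ of degree $m<d$, and Lemma \ref{lem5.1} gives $h^0(H|_X-D)=1$ for each $D\in|M|$; but all such $D$ are linearly equivalent, so the residual divisor $R$ is one and the same rigid divisor of degree $d-m$ for every $D$, and the only conclusion is that $|M|+R$ is a sub-pencil of $|H|_X|$ --- which is perfectly possible and yields no contradiction with the fibres moving. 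So you must either import the Coppens--Martens result or supply an actual argument here; without one, the proof is incomplete precisely at the point you identified.
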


\begin{proof}
Let $|L|$ be a $g_d^r$ on $X$ different from $|H|_X|$. Since $|L|$ computes $\gamma$,
it is base point free and simple (\cite{cm}). Hence for a general divisor $E$ in $|L|$ there is a point $p \in X$ 
in the support of $E$ such that the divisor $E - p$ is base point free. So Lemma \ref{lem5.1} implies that $E -p$ 
is contained in $|H|_X|$. But by the General Position Theorem (\cite[Theorem 4.1]{a2}), the greatest common divisor
of $E$ and any divisor in $|H|_X|$ has degree at most $r$. Since $r < d-1$, we get a contradiction.
\end{proof}

By the way, by Theorem \ref{thm2.3} also all curves of Theorem \ref{thm3.10} have merely one $g_d^r$.

Combining Lemma \ref{lem3.6} (or Proposition \ref{prop4.14}) with Lemma \ref{lem5.1} we obtain our second main result.

\begin{theorem} \label{thm5.2}
Let $X$ be a curve contained in a general K3-surface of degree $2r-2$ in $\PP^r$. If 
$d := \deg X \geq 8(r-1)$, then $d_{r-1} = d-1$ and $X$
does not satisfy all slope inequalities.
\end{theorem}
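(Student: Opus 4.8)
The plan is to reduce Theorem \ref{thm5.2} to the machinery already assembled in Lemma \ref{lem3.6} and Proposition \ref{prop3.8}. The hypothesis $d \geq 8(r-1)$ translates, via the numerology of curves on the K3-surface, into a condition on the integer $n$ with $X \in |nH|$: since $d = 2n(r-1)$, the bound $d \geq 8(r-1)$ is exactly $n \geq 4$. This is the regime in which Lemma \ref{lem5.1} and Corollary \ref{cor5.2} apply, so I expect those to do the heavy lifting. My first step is to verify that such an $X$ satisfies the hypotheses of Lemma \ref{lem3.6}, namely that the complete $g_d^r = |H|_X|$ has $d \geq 2r - 1 \geq 3$ (clear, since $d = 2n(r-1) \geq 8(r-1) \geq 2r-1$ once $r \geq 2$, and $r \geq 3$ by assumption), and then to establish the two numbered conditions $d_{r-1} = d-1$ and $2d \leq g + 3r - 2$.

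\textbf{Establishing condition (1): $d_{r-1} = d-1$.} This is the crux and where I expect the main obstacle to lie. I would argue that $d_{r-1} \leq d-1$ is automatic (since $d_r \leq d$ and $d_{r-1} < d_r$ by Lemma \ref{lem2.1}(a)), so the content is the reverse inequality $d_{r-1} \geq d-1$. Suppose for contradiction that $X$ admits a $g_{d-2}^{r-1}$, or more generally any $g_\delta^{r-1}$ with $\delta \leq d-2$. After removing base points I may assume it is base point free of degree $\delta < d$. Now Lemma \ref{lem5.1} forces this base-point-free divisor $D$ (a general member of the series) to satisfy $h^0(H|_X - D) = 1$, i.e. $D$ sits inside a unique hyperplane section. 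The General Position Theorem argument used in Corollary \ref{cor5.2} then applies: a moving $(r-1)$-dimensional family of divisors cannot all be cut from the essentially rigid linear system $|H|_X|$ in this way, because the common intersection of $D$ with a hyperplane section has degree at most $r$, whereas $\delta \geq r+1$. I anticipate needing to be slightly careful about the case $r-1 = 1$ versus $r - 1 \geq 2$ and about whether the series is simple or compounded, but the mechanism is the same as in Corollary \ref{cor5.2}, so I would phrase it as ``the argument of Corollary \ref{cor5.2} shows no $g_{d-2}^{r-1}$ exists,'' giving $d_{r-1} = d-1$.

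\textbf{Establishing condition (2) and concluding.} Condition (2), $2d \leq g + 3r - 2$, is a purely numerical check: substituting $d = 2n(r-1)$ and $g = n^2(r-1) + 1$ reduces it to $4n(r-1) \leq n^2(r-1) + 3r - 1$, i.e. $n(n-4)(r-1) + (3r-1) \geq 0$, which holds for $n \geq 4$ since every term on the left is nonnegative. With both conditions in hand, Lemma \ref{lem3.6} delivers at once that $d_r = d$, that $|H|_X|$ is the complete very ample $g_d^r$, and that for the Serre-dual series $g_{d'}^{r'} = |K_X - H|_X|$ we have $r' \geq r$ and $d_{r'}/r' < d_{r'+1}/(r'+1)$. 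This last strict inequality is precisely a violation of the slope inequality \eqref{eq3.1}, so $X$ does not satisfy all slope inequalities. Thus the only genuinely new work is condition (1); conditions (2) is arithmetic and the conclusion is then a direct citation of Lemma \ref{lem3.6}.
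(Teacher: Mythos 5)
Your overall architecture is the paper's: reduce to Lemma \ref{lem3.6} via Lemma \ref{lem5.1}, with the numerical condition $2d\leq g+3r-2$ checked by substituting $d=2n(r-1)$, $g=n^2(r-1)+1$ and using $n\geq 4$ (the paper leaves this computation implicit; your explicit verification is correct). The divergence, and the problem, is in the key step $d_{r-1}=d-1$. You rule out a $g^{r-1}_{\delta}$ with $\delta\leq d-2$ by importing the General Position Theorem argument from Corollary \ref{cor5.2}. But that argument requires the linear series whose general divisor is being placed in general position to be \emph{simple} and base point free (in Corollary \ref{cor5.2} simplicity is free because the competing $g^r_d$ computes the Clifford index). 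A hypothetical $g^{r-1}_{d-2}$ computing $d_{r-1}$ carries no such guarantee: it could a priori be compounded (e.g.\ a multiple of a pencil), in which case its general divisor is a union of fibres of a covering and the General Position Theorem says nothing. You flag this ("slightly careful about whether the series is simple or compounded") but then assert "the mechanism is the same," which it is not; the compounded case needs a separate argument (one can rule it out, e.g.\ via $(r-1)d_1\geq d-1$ using $d_1\geq\gamma+2=2(n-1)(r-1)$, but some argument must be supplied). You also use $\delta\geq r+1$ without justification, though that is minor (it follows from Lemma \ref{lem2.2}(d)).

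The paper avoids all of this with a shorter argument that needs no simplicity hypothesis: by Lemma \ref{lem2.1}(b) the bundle computing $d_{r-1}$ is generated, so Lemma \ref{lem5.1} applies to a divisor $D$ of the $g^{r-1}_{d_{r-1}}$ and gives $h^0(H|_X-D)=1$, i.e.\ $D\sim H|_X-E$ for an effective $E$ of degree $d-d_{r-1}$. If $d_{r-1}\leq d-2$ then $\deg E\geq 2$ while $\dim|H|_X-E|=\dim|D|\geq r-1$, contradicting the very ampleness of $|H|_X|$. I would replace your General Position step with this two-line very-ampleness argument; the rest of your proof then goes through as written.
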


\begin{proof}
A $g_{d_{r-1}}^{r-1}$ on $X$ is contained in the $g_d^r = |H|_X|$, by Lemma \ref{lem5.1}.
If $d_{r-1} \leq d-2$, this contradicts the very ampleness of the $g_d^r$. Hence $d_{r-1} = d-1$, and we can apply Lemma \ref{lem3.6}.
\end{proof}

According to Dirichlet's prime number theorem, the function
$$
f(r) := n^2(r-1) + 1
$$
represents, for every integer $n \neq 0$, infinitely many prime numbers. Hence, for fixed $n \geq 4$, Theorem \ref{thm5.2} 
produces an infinite number of curves of prime genus. By Section 2, this cannot be achieved by virtue of Theorem \ref{thm3.10}.
However, this does not yet answer the following question:

\vspace{0.2cm}
\noindent
{\bf Question 5.4}: Is there a number $g_0 \geq 9$ such that for every integer $g \geq g_0$ there is a curve of 
genus $g$ not satisfying all slope inequalities?

\section{Space curves}

The curves of Examples \ref{ex3.5}, \ref{ex4.12} (with $a=b$), \ref{ex3.11} and \ref{ex3.12} are complete intersections 
of two surfaces in $\PP^3$. More generally, based on \cite{cc}, we have the following theorem.

\begin{theorem} \label{thm6.1}
Let $X$ be a smooth complete intersection of two surfaces of degree $p$ and $s$ with $p \geq s \geq 2$ in $\PP^3$. Then 
$X$ is a space curve of degree $d:= ps$ and genus $g= \frac{1}{2}ps(p+s-4) + 1$, and we have $d_2 = d-1$. 
If $p \geq 4$, then $X$ does not satisfy all slope inequalities. 
\end{theorem}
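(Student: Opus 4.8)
{\bf Proof plan for Theorem \ref{thm6.1}.}

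The plan is to verify the hypotheses of Lemma \ref{lem3.6} (with $r=2$) and then quote it. The numerical data come directly from the complete intersection structure: the adjunction formula gives the genus $g = \frac{1}{2}ps(p+s-4)+1$, and the degree $d = ps$ is standard. The crucial input for Lemma \ref{lem3.6} is condition (1), namely $d_1 = d_{r-1} = d-1$ (here $r=2$), together with condition (2), the inequality $2d \leq g+3r-2 = g+4$. So the main work is to establish $d_2 = d-1$, which is asserted separately in the statement and is the substantive part; once we know $d_2 = ps-1$, conditions (1) and (2) are just numerical checks.

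First I would pin down $d_2 = d-1$ by appealing to \cite{cc}, exactly as was done for the type $(a,a)$ quadric case in Example \ref{ex4.12} and for the cubic-quartic case in Example \ref{ex3.12}. The geometry is that $X$ lies on the surface of degree $s$, and a complete intersection curve cannot admit a plane model of degree $d-2$ without being forced onto a line or lower-degree secant configuration that the complete intersection forbids (a $(d-1)$-secant or worse line would have to lie on both defining surfaces, hence in $X$, which is impossible). Concretely, I expect \cite{cc} to supply $d_2 = d-1$ for all complete intersections with $p \geq s \geq 2$; this is where I would cite the relevant result rather than reprove it. This gives $d_1 = d_{r-1} = d-1$ at once, which is condition (1) of Lemma \ref{lem3.6}.

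Next I would check condition (2), i.e. $2d \leq g+4$, under the hypothesis $p \geq 4$. Substituting, this reads
\begin{equation*}
2ps \leq \tfrac{1}{2}ps(p+s-4) + 5,
\end{equation*}
equivalently $ps(p+s-8) \geq -10$, i.e. $ps(p+s-8)+10 \geq 0$. For $p+s \geq 8$ this is immediate; the only cases to examine are the finitely many pairs with $p \geq s \geq 2$, $p \geq 4$, and $p+s < 8$ (so $p+s \in \{6,7\}$, giving pairs such as $(4,2),(4,3),(5,2)$), where I would simply verify the inequality by hand. I also need $d = ps \geq 3r-1 = 5$ and $d \geq 2r-1 = 3$, which hold since $p \geq 4, s \geq 2$ force $ps \geq 8$.

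With both conditions verified, Lemma \ref{lem3.6} applies directly: it yields that the Serre-dual series $g_{d'}^{r'}$ with $r' = g-1-d+2 = g-d+1 \geq r=2$ satisfies $\frac{d_{r'}}{r'} < \frac{d_{r'+1}}{r'+1}$, so $X$ violates a slope inequality. The main obstacle, and the only place requiring genuine input beyond bookkeeping, is the determination $d_2 = d-1$; everything else is a routine substitution. I would therefore structure the written proof as: (i) record $d$ and $g$ from the complete intersection, (ii) invoke \cite{cc} for $d_2 = d-1$, (iii) check $2d \leq g+4$ when $p \geq 4$, reducing to a handful of small pairs, and (iv) apply Lemma \ref{lem3.6}.
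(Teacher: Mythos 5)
Your overall strategy --- genus by adjunction, $d_2 = d-1$ from \cite{cc}, a numerical check, then Lemma \ref{lem3.6} --- is exactly the paper's, but you apply Lemma \ref{lem3.6} with the wrong value of $r$, and this breaks both of its hypotheses. The series to feed into Lemma \ref{lem3.6} is the embedding series $g_{ps}^{3}$ of the space curve, i.e.\ $r=3$, so that condition (1) reads $d_{r-1}=d_2=d-1$ (which is what \cite[Example 4.6]{cc} supplies) and condition (2) reads $2d\leq g+7$. With your choice $r=2$, condition (1) becomes $d_1=d-1$, which is false for these curves: the gonality of a complete intersection is $d-\ell$ where $\ell$ is the maximal order of a secant line, and $\ell\geq 4$ when $p\geq 4$ (e.g.\ the cubic--quartic intersection has $d=12$ but $d_1=8$, not $11$). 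Knowing $d_2=d-1$ does not ``give $d_1=d_{r-1}=d-1$ at once''; that inference is a non sequitur.

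Moreover your condition (2), $2d\leq g+4$, equivalently $ps(p+s-8)\geq -10$, actually fails for pairs within the theorem's range: for $(p,s)=(4,2)$ one gets $8\cdot(-2)=-16<-10$, and for $(4,3)$ one gets $12\cdot(-1)=-12<-10$, so the ``verify by hand'' step you defer would not succeed. With $r=3$ the correct inequality $2d\leq g+7$ amounts to $ps(8-p-s)\leq 16$, which does hold for all $p\geq 4$, $s\geq 2$: it is $\leq 0$ once $p+s\geq 8$, and equals $16$, $12$, $10$ for $(4,2)$, $(4,3)$, $(5,2)$ respectively. Once you replace $r=2$ by $r=3$ throughout (so $r'=g-d+2$ in the conclusion), your argument coincides with the paper's proof.
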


\begin{proof}
Since $\cO_X(p+s-4)$ is the canonical bundle of $X$, we have $2g-2 = d(p+s-4)$ (\cite[III, Exercise C-1]{acgh}).
In \cite[Example 4.6]{cc} it is shown that $d_2 = d-1$. The condition (2) in Lemma \ref{lem3.6} is equivalent 
to 
$$
ps(8-p-s) < 18,
$$ 
and this is fulfilled if $p \geq 4$ (i.e if $(s,p) \neq (2,2), (2,3), (3,3)$).
So Lemma \ref{lem3.6} gives the last assertion of the theorem.
\end{proof}

By \cite{b} the curve $X$ in Theorem \ref{thm6.1} has gonality $d_1 = d-\ell$ if $X$ has an $\ell$-secant line, 
but no $(\ell + 1)$-secant line. By \cite{n}, $\ell \leq s$ or $\ell = p$.
Let $p \geq 4$. Then $\gamma = d_1 - 2$ (\cite{b}) and $\ell \geq 4$ ( \cite[Lemma 2]{m1}).
In fact, according to \cite[Corollary 5.2]{hs}, if $X$ is general 
in its linear series on a smooth surface $S$ of degree $s$ and if $S$ does not contain a line, then 
$s \geq 4$ and $\ell = 4$. So the 
linear series of plane sections computes $\gamma = d - 6$ for a ``general'' complete intersection $X$ 
of two surfaces of degrees $\geq 4$ in $\PP^3$.\\ 

The curve $X$ in Theorem \ref{thm6.1} is a special case of a Halphen curve which is defined as follows.
\begin{defin}
{\em A smooth and irreducible space curves $X$ of degree $d$ is called a {\it Halphen curve} if there is 
an integer $s \geq 2$ such that $d > s(s-1)$ and if $X$ is of maximal genus among those smooth 
and irreducible space curves of degree $d$ which do not lie on a surface of degree $< s$.

If $d = ks-\epsilon$ with $0 \leq \epsilon < s$, then $X$  has genus
$$
g = G(d,s) := \frac{d^2}{2s} + \frac{d(s-4)}{2} + 1 - \frac{\epsilon}{2}(s-1-\epsilon + \frac{\epsilon}{s}),
$$
which is Halphen's bound (\cite[Remark 4.3]{cc}, note the typo there). $\hspace*{0.4cm} \square$ 
}
\end{defin}

\begin{theorem} \label{thm6.3}
Let $X$ be a Halphen curve of degree $d$. Then $d_2 = d-1$ and if $d \geq 11$, then $X$ does not 
satisfy all slope inequalities.
\end{theorem}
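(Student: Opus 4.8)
The plan is to deduce the whole statement from Lemma \ref{lem3.6} applied with $r = 3$ to the plane-section series $g_d^3$ of the nondegenerate embedding $X \subset \PP^3$, which is very ample and simple. For this I need the two hypotheses of that lemma: condition (1), $d_{r-1} = d_2 = d-1$, and condition (2), $2d \le g + (3r-2) = g + 7$. Granting these, the lemma yields $d_r = d$ and, at the index $r' = g-1-d+3 = g-d+2$ of the Serre-dual series, the strict inequality $\frac{d_{r'}}{r'} < \frac{d_{r'+1}}{r'+1}$, so $X$ violates a slope inequality. The requirement $d \ge 2r-1 = 5$ is automatic once $d \ge 11$.

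First I would establish $d_2 = d-1$, which is asserted for all Halphen curves. The bound $d_2 \le d-1$ is elementary: projecting $X$ from one of its own points yields a plane model of degree $d-1$. The reverse inequality $d_2 \ge d-1$ is the heart of the matter, and I expect it to be the main obstacle. Here the defining property of a Halphen curve --- maximal genus among degree-$d$ space curves not lying on a surface of degree $< s$ --- is essential, since a plane model of degree $\le d-2$ is incompatible with this maximality; I would obtain $d_2 = d-1$ from the analysis of plane models of maximal-genus space curves in \cite{cc}, exactly as the complete-intersection case of Theorem \ref{thm6.1} invokes \cite[Example 4.6]{cc}. A self-contained alternative, modelled on Example \ref{ex3.12}, is to assume a $g_{d-2}^2$ computing the Clifford index, form $g_d^3 + g_{d-2}^2$ via \cite[Lemma 5.1]{a2}, and dualize to produce a series computing $\gamma$ in a degree range excluded by \cite{cm}; the force of this route, however, still rests on the classification of series computing $\gamma$ for such curves.

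Finally I would verify condition (2). Writing $g = G(d,s)$ with $d = ks - \epsilon$, $0 \le \epsilon < s$, the inequality $2d \le g+7$ becomes
\begin{equation*}
\frac{d^2}{2s} + \frac{ds}{2} - 4d + 8 - \frac{\epsilon}{2}\Bigl(s - 1 - \epsilon + \frac{\epsilon}{s}\Bigr) \ge 0 .
\end{equation*}
The constraint $d > s(s-1)$ bounds $s$, and the extremal configuration is $s = 3$: with $\epsilon = 1$ it reduces to $d^2 - 15 d + 44 \ge 0$, i.e. to $d \ge 11$, with equality exactly at $d = 11$ (where $g = 15 = 2d-7$). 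After checking that this is indeed the binding case and dispatching the finitely many remaining small configurations directly, condition (2) holds for every Halphen curve of degree $d \ge 11$. With (1) and (2) verified, Lemma \ref{lem3.6} applies and gives the claimed violation of the slope inequalities.
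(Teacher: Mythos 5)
Your proposal follows the same route as the paper: the paper also gets $d_2=d-1$ by citing Chiantini--Ciliberto (namely \cite[Theorem 4.7]{cc}, together with Theorem \ref{thm6.1}), and then reduces everything to verifying condition (2) of Lemma \ref{lem3.6}, i.e. $g-1>2d-9$, from Halphen's genus bound, with $s=3$ indeed the case that forces the threshold $d\geq 11$. The only cosmetic difference is that the paper bounds the $\epsilon$-term uniformly by $\frac{1}{8}s(s-1)$ and checks the resulting polynomial inequality for each $s$, rather than evaluating the binding configuration $(s,\epsilon)=(3,1)$ exactly as you do; your arithmetic there is correct.
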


\begin{proof}
According to Theorem \ref{thm6.1} and \cite[Theorem 4.7]{cc} we have $d_2 = d-1$. In order to apply Lemma \ref{lem3.6} 
we have to check that $g-1 > 2d-9$.

The function 
$$
f(\epsilon) := \frac{\epsilon}{2}(s-1-\epsilon + \frac{\epsilon}{s}) = \frac{s-1}{2s}\epsilon (s-\epsilon)
$$
is maximal for $\epsilon = \frac{s}{2}$. So $f(\epsilon) \leq \frac{1}{8}s(s-1)$ for $0 \leq \epsilon < s$.
Hence $g-1 > 2d-9$ is satisfied if 
$$
\frac{d^2}{2s} + \frac{d(s-4)}{2} -f(\epsilon) \geq \frac{d^2}{2s} + \frac{d(s-4)}{2} - \frac{s(s-1)}{8} > 2d-9,
$$
i.e. if 
$$
d^2 + s(s-8)d + 18s > \frac{1}{4}s^2(s-1).
$$
Recall that $d > s(s-1)$. So this inequality is easily seen to be valid for $s \geq 4$. For $s=2$ it is valid for 
$d \geq 8$ and for $s=3$ for $d \geq 11$.
\end{proof}

Note that the curves of Theorems \ref{thm3.10}, \ref{thm5.2}, \ref{thm6.1} and \ref{thm6.3} are projectively normal 
(= arithmetically Cohen-Macaulay) under their embedding considered. One may ask the following question:

\begin{qu}
{\em  
Does a smooth, irreducible
and projectively normal curve of degree $d$ in $\PP^r$ have $d_{r-1} = d-1$?}
\end{qu}


\begin{thebibliography}{CAV}
\bibitem{a1}
R.D.M. Accola:
\emph{Plane models for Riemann surfaces admitting certain half-canonical linear series}.
Proc. of the 1978 Stony Brook conference. Princeton Univers. Press (1981), 7-20.
\bibitem{a2}
R.D.M. Accola:
\emph{On Castelnuovo's inequality for algebraic curves I}.
Trans. AMS 251 (1979), 357-373.
\bibitem{acgh}
E. Arbarello, M. Cornalba, P. A. Griffiths and J. Harris: 
\emph{Geometry of Algebraic Curves I}. 
Springer, Grundlehren math Wiss. 267 (1985).
\bibitem{b}
B. Basili:
\emph{Indice de Clifford des intersections compl\`etes de l'espace}.
Bull. Soc. Math. France 124 (1996), 61-95.
\bibitem{cc} L. Chiantini, C. Ciliberto:
\emph{Towards a Halphen Theory of linear series on curves}.
Trans. AMS 351 (1999), 2197-2212.
\bibitem{ci} C. Ciliberto:
\emph{Alcune applicazione di un classico procedimento di Castelnuovo}.
Sem. di Geom., Dipart. di Matem., Univ. di Bologna, (1982-83), 17--43.
\bibitem{cm} M. Coppens and G. Martens: 
\emph{Secant spaces and Clifford's theorem}.
Comp. Math. 78 (1991), 193-212.
\bibitem{gr} A. Grothendieck:
\emph{Fondements de la G\'eom\'etrie Alg\'ebrique}.
S\'em Bourbaki 1957-1962, Paris (1962).
\bibitem{hs} R. Hartshorne, E. Schlesinger:
\emph{Gonality of a general ACM curve in $\PP^3$}.
arXiv: 0812.1634v1 (2008).
\bibitem{ln} H. Lange and P. E. Newstead: 
\emph{Clifford indices for vector bundles on curves}.
To appear in: A. Schmitt (Ed.) Affine Flag Manifolds and Principal Bundles. Birkh\"auser. 
\bibitem{m} G. Martens:
\emph{The gonality of curves on a Hirzebruch surface}.
Arch. Math. 67 (1996), 349-352.
\bibitem{m1} G. Martens:
\emph{\"Uber den Clifford-Index algebraischer Kurven}.
J. reine angew. Math. 336 (1982), 83-90.
\bibitem{n} S. Nollet:
\emph{Bounds on multisecant lines}.
Collect. Math. 49 (1998), 447-463.
\bibitem{p} S.-S. Park:
\emph{On the variety of special linear series on a general $5$-gonal curve}.
Abh. Math. Sem. Univ. Hamburg 72 (2002), 283-291.
\bibitem{re} I. Reider:
\emph{Some applications of Bogomolov's theorem}.
In: F. Catanese (Ed,): Problems in the theory of surfaces and their classification. Symposia Mathematica 32. 
Acad. Press (1991), 376-410.
\end{thebibliography}
\end{document}